\newtheorem{theorem}{Theorem}[section]
\theoremstyle{definition}
\newtheorem{defn}[theorem]{Definition}
\newtheorem{lemma}[theorem]{Lemma}
\newtheorem{prop-def}{Proposition-Definition}[section]
\newtheorem{coro-def}{Corollary-Definition}[section]
\newtheorem{remark}[theorem]{Remark}
\newtheorem{exam}[theorem]{Example}
\newcommand{\nc}{\newcommand}
\nc{\tred}[1]{\textcolor{red}{#1}}
\nc{\tblue}[1]{\textcolor{blue}{#1}}
\nc{\tgreen}[1]{\textcolor{green}{#1}}
\nc{\tpurple}[1]{\textcolor{purple}{#1}}
\nc{\btred}[1]{\textcolor{red}{\bf #1}}
\nc{\btblue}[1]{\textcolor{blue}{\bf #1}}
\nc{\btgreen}[1]{\textcolor{green}{\bf #1}}
\nc{\btpurple}[1]{\textcolor{purple}{\bf #1}}
\nc{\NN}{{\mathbb N}}
\nc{\ncsha}{{\mbox{\cyr X}^{\mathrm NC}}} \nc{\ncshao}{{\mbox{\cyr
X}^{\mathrm NC}_0}}
\newcommand{\efootnote}[1]{}
\renewcommand{\textbf}[1]{}
\newcommand{\delete}[1]{}
\nc{\mlabel}[1]{\label{#1}}  
\nc{\mcite}[1]{\cite{#1}}  
\nc{\mref}[1]{\ref{#1}}  
\nc{\mbibitem}[1]{\bibitem{#1}} 
\nc{\mlabel}[1]{\label{#1}{\hfill \hspace{1cm}{\bf{{\ }\hfill(#1)}}}}
\nc{\mcite}[1]{\cite{#1}{{\bf{{\ }(#1)}}}}  
\nc{\mref}[1]{\ref{#1}{{\bf{{\ }(#1)}}}}  
\nc{\mbibitem}[1]{\bibitem[\bf #1]{#1}} 
\nc{\opa}{\ast} \nc{\opb}{\odot} \nc{\op}{\bullet} \nc{\pa}{\frakL}
\nc{\arr}{\rightarrow} \nc{\lu}[1]{(#1)} \nc{\mult}{\mrm{mult}}
\nc{\diff}{\mathfrak{Diff}}
\nc{\opc}{\sharp}\nc{\opd}{\natural}
\nc{\ope}{\circ}
\nc{\dpt}{\mathrm{d}}
\nc{\hck}{H_{RT}}
\nc{\vdf}{\calf}
\nc{\ldf}{\calf_\ell}
\nc{\hlf}{H_\ell}
\nc{\onek}{\mathbf{1}_\bfk}
\nc{\diam}{alternating\xspace}
\nc{\Diam}{Alternating\xspace}
\nc{\cdiam}{canonical alternating\xspace}
\nc{\Cdiam}{Canonical alternating\xspace}
\nc{\AW}{\mathcal{A}}
\nc{\ari}{\mathrm{ar}}
\nc{\lef}{\mathrm{lef}}
\nc{\Sh}{\mathrm{ST}}
\nc{\Cr}{\mathrm{Cr}}
\nc{\st}{{Schr\"oder tree}\xspace}
\nc{\sts}{{Schr\"oder trees}\xspace}
\nc{\vertset}{\Omega} 
\nc{\assop}{\quad \begin{picture}(5,5)(0,0)
\line(-1,1){10}
\put(-2.2,-2.2){$\bullet$}
\line(0,-1){10}\line(1,1){10}
\end{picture} \quad \smallskip}
\nc{\operator}{\begin{picture}(5,5)(0,0)
\line(0,-1){6}
\put(-2.6,-1.8){$\bullet$}
\line(0,1){9}
\end{picture}}
\nc{\idx}{\begin{picture}(6,6)(-3,-3)
\put(0,0){\line(0,1){6}}
\put(0,0){\line(0,-1){6}}
\end{picture}}
\nc{\pb}{{\mathrm{pb}}}
\nc{\Lf}{{\mathrm{Lf}}}
\nc{\lft}{{left tree}\xspace}
\nc{\lfts}{{left trees}\xspace}
\nc{\fat}{{fundamental averaging tree}\xspace}
\nc{\fats}{{fundamental averaging trees}\xspace}
\nc{\avt}{\mathrm{Avt}}
\nc{\rass}{{\mathit{RAss}}}
\nc{\aass}{{\mathit{AAss}}}
\nc{\vin}{{\mathrm Vin}}    
\nc{\lin}{{\mathrm Lin}}    
\nc{\inv}{\mathrm{I}n}
\nc{\gensp}{V} 
\nc{\genbas}{\mathcal{V}} 
\nc{\bvp}{V_P}     
\nc{\gop}{{\,\omega\,}}     
\nc{\bin}[2]{ (_{\stackrel{\scs{#1}}{\scs{#2}}})}  
\nc{\binc}[2]{ \left (\!\! \begin{array}{c} \scs{#1}\\
    \scs{#2} \end{array}\!\! \right )}  
\nc{\bincc}[2]{  \left ( {\scs{#1} \atop
    \vspace{-1cm}\scs{#2}} \right )}  
\nc{\bs}{\bar{S}} \nc{\cosum}{\sqsubset} \nc{\la}{\longrightarrow}
\nc{\rar}{\rightarrow} \nc{\dar}{\downarrow} \nc{\dprod}{**}
\nc{\dap}[1]{\downarrow \rlap{$\scriptstyle{#1}$}}
\nc{\md}{\mathrm{dth}} \nc{\uap}[1]{\uparrow
\rlap{$\scriptstyle{#1}$}} \nc{\defeq}{\stackrel{\rm def}{=}}
\nc{\disp}[1]{\displaystyle{#1}} \nc{\dotcup}{\
\displaystyle{\bigcup^\bullet}\ } \nc{\gzeta}{\bar{\zeta}}
\nc{\hcm}{\ \hat{,}\ } \nc{\hts}{\hat{\otimes}}
\nc{\barot}{{\otimes}} \nc{\free}[1]{\bar{#1}}
\nc{\uni}[1]{\tilde{#1}} \nc{\hcirc}{\hat{\circ}} \nc{\lleft}{[}
\nc{\lright}{]} \nc{\lc}{\lfloor} \nc{\rc}{\rfloor}
\nc{\curlyl}{\left \{ \begin{array}{c} {} \\ {} \end{array}
    \right .  \!\!\!\!\!\!\!}
\nc{\curlyr}{ \!\!\!\!\!\!\!
    \left . \begin{array}{c} {} \\ {} \end{array}
    \right \} }
\nc{\longmid}{\left | \begin{array}{c} {} \\ {} \end{array}
    \right . \!\!\!\!\!\!\!}
\nc{\onetree}{\bullet} \nc{\ora}[1]{\stackrel{#1}{\rar}}
\nc{\ola}[1]{\stackrel{#1}{\la}}
\nc{\ot}{\otimes} \nc{\mot}{{{\boxtimes\,}}}
\nc{\otm}{\overline{\boxtimes}} \nc{\sprod}{\bullet}
\nc{\scs}[1]{\scriptstyle{#1}} \nc{\mrm}[1]{{\rm #1}}
\nc{\margin}[1]{\marginpar{\rm #1}}   
\nc{\dirlim}{\displaystyle{\lim_{\longrightarrow}}\,}
\nc{\invlim}{\displaystyle{\lim_{\longleftarrow}}\,}
\nc{\mvp}{\vspace{0.3cm}} \nc{\tk}{^{(k)}} \nc{\tp}{^\prime}
\nc{\ttp}{^{\prime\prime}} \nc{\svp}{\vspace{2cm}}
\nc{\vp}{\vspace{8cm}} \nc{\proofbegin}{\noindent{\bf Proof: }}
\nc{\proofend}{$\blacksquare$ \vspace{0.3cm}}
\nc{\modg}[1]{\!<\!\!{#1}\!\!>}
\nc{\intg}[1]{F_C(#1)} \nc{\lmodg}{\!
<\!\!} \nc{\rmodg}{\!\!>\!}
\nc{\cpi}{\widehat{\Pi}}
\nc{\sha}{{\mbox{\cyr X}}}  
\nc{\shap}{{\mbox{\cyrs X}}} 
\nc{\shpr}{\diamond}    
\nc{\shp}{\ast} \nc{\shplus}{\shpr^+}
\nc{\shprc}{\shpr_c}    
\nc{\msh}{\ast} \nc{\zprod}{m_0} \nc{\oprod}{m_1}
\nc{\vep}{\epsilon} \nc{\labs}{\mid\!} \nc{\rabs}{\!\mid}
\nc{\sqmon}[1]{\langle #1\rangle}
\nc{\mmbox}[1]{\mbox{\ #1\ }} \nc{\dep}{\mrm{dep}} \nc{\fp}{\mrm{FP}}
\nc{\rchar}{\mrm{char}} \nc{\End}{\mrm{End}} \nc{\Fil}{\mrm{Fil}}
\nc{\Mor}{Mor\xspace} \nc{\gmzvs}{gMZV\xspace}
\nc{\gmzv}{gMZV\xspace} \nc{\mzv}{MZV\xspace}
\nc{\mzvs}{MZVs\xspace} \nc{\Hom}{\mrm{Hom}} \nc{\id}{\mrm{id}}
\nc{\im}{\mrm{im}} \nc{\incl}{\mrm{incl}} \nc{\map}{\mrm{Map}}
\nc{\mchar}{\rm char} \nc{\nz}{\rm NZ} \nc{\supp}{\mathrm Supp}
\nc{\Alg}{\mathbf{Alg}} \nc{\Bax}{\mathbf{Bax}} \nc{\bff}{\mathbf f}
\nc{\bfk}{{\bf k}} \nc{\bfone}{{\bf 1}} \nc{\bfx}{\mathbf x}
\nc{\bfy}{\mathbf y}
\nc{\base}[1]{\bfone^{\otimes ({#1}+1)}} 
\nc{\Cat}{\mathbf{Cat}}
\nc{\detail}{\marginpar{\bf More detail}
    \noindent{\bf Need more detail!}
    \svp}
\nc{\Int}{\mathbf{Int}} \nc{\Mon}{\mathbf{Mon}}
\nc{\rbtm}{{shuffle }} \nc{\rbto}{{Rota-Baxter }}
\nc{\remarks}{\noindent{\bf Remarks: }} \nc{\Rings}{\mathbf{Rings}}
\nc{\Sets}{\mathbf{Sets}} \nc{\wtot}{\widetilde{\odot}}
\nc{\wast}{\widetilde{\ast}} \nc{\bodot}{\bar{\odot}}
\nc{\bast}{\bar{\ast}} \nc{\hodot}[1]{\odot^{#1}}
\nc{\hast}[1]{\ast^{#1}} \nc{\mal}{\mathcal{O}}
\nc{\tet}{\tilde{\ast}} \nc{\teot}{\tilde{\odot}}
\nc{\oex}{\overline{x}} \nc{\oey}{\overline{y}}
\nc{\oez}{\overline{z}} \nc{\oef}{\overline{f}}
\nc{\oea}{\overline{a}} \nc{\oeb}{\overline{b}}
\nc{\weast}[1]{\widetilde{\ast}^{#1}}
\nc{\weodot}[1]{\widetilde{\odot}^{#1}} \nc{\hstar}[1]{\star^{#1}}
\nc{\lae}{\langle} \nc{\rae}{\rangle}
\nc{\lf}{\lfloor}
\nc{\rf}{\rfloor}
\nc{\QQ}{{\mathbb Q}}
\nc{\RR}{{\mathbb R}} \nc{\ZZ}{{\mathbb Z}}
\nc{\cala}{{\mathcal A}} \nc{\calb}{{\mathcal B}}
\nc{\calc}{{\mathcal C}}
\nc{\cald}{{\mathcal D}} \nc{\cale}{{\mathcal E}}
\nc{\calf}{{\mathcal F}} \nc{\calg}{{\mathcal G}}
\nc{\calh}{{\mathcal H}} \nc{\cali}{{\mathcal I}}
\nc{\call}{{\mathcal L}} \nc{\calm}{{\mathcal M}}
\nc{\caln}{{\mathcal N}} \nc{\calo}{{\mathcal O}}
\nc{\calp}{{\mathcal P}} \nc{\calr}{{\mathcal R}}
\nc{\cals}{{\mathcal S}} \nc{\calt}{{\mathcal T}}
\nc{\calu}{{\mathcal U}} \nc{\calw}{{\mathcal W}} \nc{\calk}{{\mathcal K}}
\nc{\calx}{{\mathcal X}} \nc{\CA}{\mathcal{A}}
\nc{\fraka}{{\mathfrak a}} \nc{\frakA}{{\mathfrak A}}
\nc{\frakb}{{\mathfrak b}} \nc{\frakB}{{\mathfrak B}}
\nc{\frakD}{{\mathfrak D}} \nc{\frakF}{\mathfrak{F}}
\nc{\frakf}{{\mathfrak f}} \nc{\frakg}{{\mathfrak g}}
\nc{\frakH}{{\mathfrak H}} \nc{\frakL}{{\mathfrak L}}
\nc{\frakM}{{\mathfrak M}} \nc{\bfrakM}{\overline{\frakM}}
\nc{\frakm}{{\mathfrak m}} \nc{\frakP}{{\mathfrak P}}
\nc{\frakN}{{\mathfrak N}} \nc{\frakp}{{\mathfrak p}}
\nc{\frakS}{{\mathfrak S}} \nc{\frakT}{\mathfrak{T}}
\nc{\frakX}{{\mathfrak X}}
\nc{\BS}{\mathbb{S
}}
\font\cyr=wncyr10 \font\cyrs=wncyr7
\nc{\li}[1]{\textcolor{red}{Li:#1}}
\nc{\yi}[1]{\textcolor{red}{Yi: #1}}
\nc{\xing}[1]{\textcolor{purple}{Xing:#1}}
\nc{\revise}[1]{\textcolor{red}{#1}}
\nc{\ID}{{\rm I}}\nc{\lbar}[1]{\overline{#1}}\nc{\bre}{{\rm bre}}
\nc{\sd}{\cals}\nc{\rb}{\rm RB}\nc{\A}{\rm A}\nc{\LL}{\rm L}\nc{\tx}{\tilde{X}}
\nc{\col}{\Delta_{\epsilon}}\nc{\mul}{m_{\mathrm{RT}}}\nc{\ul}{u_{RT}}\nc{\epl}{\epsilon_{RT}}
\nc{\hl}{H_{RT}}\nc{\arro}[1]{#1}\nc{\px}{P_{\tx}}\nc{\pw}{P_{\mathfrak{w}}}\nc{\pl}{B^+}
\nc{\pp}{\pl}\nc{\ppp}[1]{B^+(#1)}\nc{\dw}{\diamond_{\mathfrak{w}}}\nc{\dl}{\diamond_{\rm \ell}}
\nc{\ncshaw}{\sha^{{\rm NC}}_{\mathfrak{w}}}\nc{\ncshal}{\sha^{{\rm NC}}_{{\rm \ell}}}
\nc{\ver}{\rm V}\nc{\ld}{l}\nc{\del}{\Delta_{{\rm \ell}}}\nc{\epsl}{\epsilon_{{\rm \ell}}}
\nc{\uul}{u_{{\rm \ell}}}\nc{\oneh}{\mathbf{1}}\nc{\onew}{\mathbf{1}}
\nc{\etree}{1} \nc{\conc}{m_{RT}} \nc{\subq}{\bfk Q_l} \nc{\fid}{\unlhd}  \nc{\sfid}{\lhd}
\nc{\lhl}{\leq_{h,l}} \nc{\ghl}{\geq_{hl}}
\nc{\hrtb}{H_{RT}(X\sqcup\Omega)} \nc{\hrts}{H_{RT}(X, \Omega)}\nc{\rts}{\mathcal{T}(X, \Omega)}\nc{\rfs}{\mathcal{F}(X, \Omega)} \nc{\fm}{m_{FM}} \nc{\coll}{\Delta_\lambda}
\nc{\mn}{M_n(\bfk)}
\begin{document}

\title[Weighted infinitesimal unitary bialgebras, matrix algebras and polynomial algebras]{Weighted infinitesimal unitary bialgebras, pre-Lie, matrix algebras and polynomial algebras}
%


\author{Yi Zhang}
\address{School of Mathematics and Statistics, Lanzhou University, Lanzhou, Gansu 730000, P.\,R. China}
         \email{zhangy2016@lzu.edu.cn}

\author{Jia-Wen Zheng}
\address{School of Mathematics and Statistics, Lanzhou University, Lanzhou, Gansu 730000, P.\,R. China}
         \email{zhengjw16@lzu.edu.cn}

\author{Yan-Feng Luo} \address{School of Mathematics and Statistics,
Key Laboratory of Applied Mathematics and Complex Systems,
Lanzhou University, Lanzhou, 730000, P.\,R. China}
         \email{luoyf@lzu.edu.cn}

\date{\today}
\begin{abstract}
Motivated by comatrix coalgebras, we introduce the concept of a Newtonian comatrix coalgebra.
We construct an infinitesimal unitary bialgebra on matrix algebras,  via the construction of a suitable coproduct. As a consequence, a Newtonian comatrix coalgebra is established.
Furthermore, an infinitesimal unitary Hopf algebra, under the view of Aguiar, is constructed on matrix algebras.
By the close relationship between pre-Lie algebras and infinitesimal unitary bialgebras, we erect a pre-Lie algebra and  a new Lie algebra on matrix algebras. Finally, a weighted infinitesimal unitary bialgebra on non-commutative polynomial algebras is also given.
\end{abstract}

\subjclass[2010]{
15A30, 
16W99, 
16T10, 
17B60, 
17D25  	
}

\keywords{matrix algebra, polynomial algebra, infinitesimal bialgebra, pre-Lie algebra. }

\maketitle

\tableofcontents

\setcounter{section}{0}

\allowdisplaybreaks

\section{Introduction}

A weighted infinitesimal unitary bialgebra is a module $A$ which is simultaneously an algebra (possibly without a unit) and a coalgebra (possibly without a counit) such that the coproduct $\Delta$ is a weighted derivation of $A$ in the sense that
$$\Delta(ab)=a\cdot\Delta(b)+\Delta(a)\cdot b+\lambda (a\ot b)\,\text{ for } a, b\in A,$$
where $\lambda\in \bfk$ is a fixed constant.

Weighted infinitesimal unitary bialgebras, first appeared in~\mcite{OP10} and further studied in~\mcite{BGGZ18, GZ, ZGZ18}, are in order to give  an  algebraic meaning of  non-homogenous associative classical Yang-Baxter equations in the context of associative algebras~\mcite{OP10}.
It should be pointed out that the weighted infinitesimal unitary bialgebra is a uniform version of two infinitesimal bialgebras. The first version of infinitesimal bialgebras, also called Newtonian coalgebra~\mcite{HR92}, introduced by Joni and Rota~\mcite{JR}, are aimed at giving an algebraic framework for the calculus of Newton divided differences.
Namely, an infinitesimal bialgebra is a module $A$ which is simultaneously an algebra (possibly without a unit) and a coalgebra (possibly without a counit) such that the coproduct $\Delta$ is a derivation of $A$ in the sense that
$$\Delta(ab)=a\cdot\Delta(b)+\Delta(a)\cdot b\,\text{ for } a, b\in A.$$
The basic theory of infinitesimal bialgebras and infinitesimal Hopf algebras was developed by Aguiar~\mcite{MA, Agu01, Agu02, Aguu02}, has proven useful not only in combinatorices~\mcite{Agu02, ER98}, but in other areas of mathematics as well, such as associative Yang-Baxter equations~\mcite{MA, Aguu02}, Drinfeld's doubles~\mcite{MA, WW14} and pre-Lie algebras~\mcite{MA}.
The second version of infinitesimal bialgebras  was defined by Loday and Ronco~\mcite{LR06}
and brought new life on rooted trees by Foissy~\mcite{Foi09, Foi10} in the sense that
$$\Delta(ab)=a\cdot\Delta(b)+\Delta(a)\cdot b-a\ot b\,\text{ for } a, b\in A.$$

In 2010, Ogievetsky and Popov~\mcite{OP10} showed that weighted infinitesimal unitary bialgebras play an important role in  mathematical physics. Given a solution $r\in A\ot A$ of the non-homogenous associative classical Yang-Baxter equation,
one can construct a weighted infinitesimal unitary bialgebra~\mcite{OP10},  involving a coproduct given by
\begin{align*}
\Delta_{r}(a) :=a\cdot r- r\cdot a-\lambda (a\ot 1) \, \text{ for }\, a\in A.
\end{align*}
Weighted infinitesimal unitary bialgebras also have found applications in combinatorics. Based on Hochschild $1$-cocycle conditions in Cartier-Quillen cohomology, we can construct  weighted infinitesimal unitary bialgebras on various combinatorial objects, such as planar rooted forests and  decorated rooted forests~\mcite{Foi09, GZ, ZCGL18, ZGL182}. A  surprising phenomena showed that these weighted infinitesimal unitary bialgebras can be treated in the framework of operated algebras, via  grafting operators~\mcite{ZCGL18, ZGL182}. It has been observed that the weighted infinitesimal unitary bialgebras on rooted forests possess universal properties.
In particular, the objects studied in the well-known Connes-Kreimer Hopf algebra have a free cocycle weighted infinitesimal  bialgebraic structure~\mcite{GZ, ZCGL18, ZGL182}.
Thus it would be interesting to construct a number of weighted infinitesimal bialgebras on some combinatorial objects or various well-known algebras. This is our main goal of this paper.

Let $M_{n}(\bfk)$ be the matrix algebra and $\{E_{ij}\}_{1\leq i, j\leq n}$ the canonical $\bfk$-basis for $M_{n}(\bfk)$. Then $M_{n}(\bfk)$ has a coalgebraic structure~\mcite{Rad12} determined by
\begin{align*}
\Delta(E_{ij}):=\sum_{s=1}^{n}E_{is}\ot E_{sj}\, \text{ and }\, \varepsilon(E_{ij}):=\delta_{ij},
\end{align*}
where $\delta_{ij}$ is the Kronecker function. This comatrix coalgebra plays a foundmental role in classical bialgebras. It is almost a natural question to wonder whether we can construct an infinitesimal bialgebra on $M_{n}(\bfk)$. This paper gives a positive answer to this question.
Strongly motivated by the construction of comatrix coalgebra,
we show that $M_{n}(\bfk)$ possesses an infinitesimal unitary bialgebraic structure  with the coproduct
defined by
\begin{align*}
\col (E_{ij}):=\begin{cases}\sum_{s=i}^{j-1}E_{is}\otimes E_{(s+1)j} &\text{ if } i< j,\\
0 &\text{ if }i= j,\\
-\sum_{s=j}^{i-1}E_{is}\otimes E_{(s+1)j} &\text{ if } i> j.
\end{cases}
\end{align*}
We call this infinitesimal unitary bialgebra a Newtonian comatrix coalgebra.
We emphasize  that the Newtonian comatrix coalgebra is different from the one introduced in \mcite{ZGZ18}. See Remark~\mref{remk:Newcoal} below.
Moreover, we  equip an infinitesimal unitary bialgebra on matrix algebras with an antipode such that it is further an infinitesimal unitary Hopf algebra, under the view of Aguiar~\mcite{MA}. As a related result, an infinitesimal unitary bialgebra of weight $\lambda$ on non-commutative polynomial algebras is also given.

Pre-Lie algebras, also called Vinberg algebras, first appeared in the work of Vinberg~\mcite{Vin63} under the name left-symmetric algebras on cones and also appeared independently in the study of deformation and cohomology of associative algebras~\mcite{Ger63}. Later, there have been several interesting developments of pre-Lie algebras  in mathematices and mathematical physics, such as classical and quantum Yang-Baxter equations~\mcite{Bai07, Bor90, ES99, GS00}, Lie groups and Lie algebras~\mcite{AS05, Kim86, Med81, Vin63}, pre-Poisson algebras~\mcite{Agu00} and Poisson brackets~\mcite{BN85}, quantum field theory~\mcite{CK98} and operads~\mcite{CL01, PBG17}, $\mathcal{O}$-operators~\mcite{Bai, BGN12} and Rota-Baxter algebras~\mcite{AB08, BBGN13, Gub, LHB07}. Because of the non-associativity of pre-Lie algebras, there is not a suitable representation theory and not a complete structure theory of pre-Lie algebras~\mcite{Bai}. It is natural to consider how to construct them from some algebraic structures  which we have known. The present paper is an attempt  to construct  pre-Lie algebraic structures on some associative algebras, especially on matrix algebras.
In the algebraic framework of Aguiar~\mcite{Aguu02} for infinitesimal bialgebras, a pre-Lie algebraic structure is constructed from an arbitrary infinitesimal (unitary) bialgebra.  As an application, a pre-Lie algebraic structure and then a new Lie algebraic structure on matrix algebras are built in this paper.

{\bf Notation.}
Throughout this paper, let $\bfk$ be a unitary commutative ring unless the contrary is specified,
which will be the base ring of all modules, algebras, coalgebras, bialgebras, tensor products, as well as linear maps.
By an algebra we mean an associative algebra (possibly without a unit)
and by a coalgebra we mean a coassociative coalgebra (possibly without a counit).
For an algebra $A$, we view $A\ot A$ as an $(A,A)$-bimodule via
\begin{equation}
a\cdot(b\otimes c):=ab\otimes c\,\text{ and }\, (b\otimes c)\cdot a:= b\otimes ca,
\mlabel{eq:dota}
\end{equation}
where $a,b,c\in A$. We shall use the sign function $\mathrm{sgn}(x)$, $x\in Z$, which is given by
\begin{align*}
\mathrm{sgn}(x):=\begin{cases}
1 &\text{$\mathrm{if}$}\, x> 0,\\
0 &\text{$\mathrm{if}$}\, x=0,\\
-1 &\text{$\mathrm{if}$}\, x<0.
\end{cases}
\end{align*}

\section{Weighted infinitesimal unitary bialgebras and infinitesimal unitary Hopf algebras}\label{sec:infbi}
In this section, we first recall the concept of a weighted infinitesimal (unitary) bialgebra~\mcite{GZ}, which generalise simultaneously the one introduced by Joni and Rota~\mcite{JR} and the one initiated by Loday and Ronco~\mcite{LR06}. We then recall the notation of an infinitesimal (unitary) Hopf algebra, under the view of Aguiar.

\subsection{Weighted infinitesimal unitary bialgebras}
The following is the concept of a weighted infinitesimal (unitary) bialgebra proposed in~\mcite{GZ}.

\begin{defn}\mcite{GZ}
Let $\lambda$ be a given element of $\bfk$.
An {\bf infinitesimal bialgebra} (abbreviated {\bf $\epsilon$-bialgebra}) {\bf of weight $\lambda$} is a triple $(A,m,\Delta)$
consisting of an algebra $(A,m)$ (possibly without a unit) and a coalgebra $(A,\Delta)$ (possibly without a counit) that satisfies
\begin{equation}
\Delta (ab)=a\cdot \Delta(b)+\Delta(a) \cdot b+\lambda (a\ot b),\quad \forall a, b\in A.
\mlabel{eq:cocycle}
\end{equation}
If further $(A,m,1)$ is a unitary algebra, then the quadruple $(A,m,1, \Delta)$ is called an {\bf infinitesimal unitary bialgebra} (abbreviated {\bf $\epsilon$-unitary bialgebra}) {\bf of weight $\lambda$}.
\mlabel{def:iub}
\end{defn}

The concept of an $\epsilon$-bialgebra morphism is given as usual.

\begin{defn}\mcite{GZ}
Let $A$ and  $B$ be two $\epsilon$-bialgebras of weight $\lambda$.
A map $\phi : A\rightarrow B$ is called an {\bf infinitesimal bialgebra morphism} (abbreviated $\epsilon$-bialgebra morphism) if $\phi$ is an algebra morphism and a coalgebra morphism. The concept of an {\bf infinitesimal unitary bialgebra morphism} can be defined in the same way.
\end{defn}

\begin{remark}\label{remk:unit1}
\begin{enumerate}

\item The $\epsilon$-bialgebra introduced by Joni and Rota~\mcite{JR} is of weight zero
and the $\epsilon$-bialgebra originated from Loday and Ronco~\mcite{LR06} is of weight $-1$.

\item Let $(A,m,1, \Delta)$ be an $\epsilon$-unitary bialgebra of weight $\lambda$. Then $\Delta(1)=-\lambda(1\ot1)$ by
\begin{align*}
\Delta(1)=\Delta(1\cdot1)=1 \cdot \Delta(1) +\Delta(1)\cdot 1 +\lambda (1\ot 1)=2\Delta(1)+\lambda (1\ot 1).
\end{align*}

\item \mlabel{remk:b}
Aguiar~\mcite{MA} pointed out that there is no nonzero $\epsilon$-bialgebra of weight zero which is both unitary and counitary.
Indeed, it follows the counicity that
$$1\ot 1_{\bfk}=(\id \ot \varepsilon)\Delta(1)=0$$ and so $1=0$.

\end{enumerate}

\end{remark}

\begin{exam}\label{exam:bialgebras}
Here are some examples of weighted $\epsilon$-(unitary) bialgebras.
\begin{enumerate}
\item Any unitary algebra $(A, m,1_A)$ is an $\epsilon$-unitary bialgebra of weight zero when the coproduct is taken to be $\Delta=0$.
\item \cite[Example~2.3.5]{MA} The {\bf polynomial algebra} $\bfk \langle x_1, x_2, x_3,\ldots \rangle$ is an $\epsilon$-unitary bialgebra of weight zero with the coproduct $\Delta$
given by Eq.~(\mref{eq:cocycle}) and
 \begin{align*}
    \Delta (x_n)=\sum_{i=0}^{n-1}x_{i}\ot x_{n-1-i}=1\ot x_{n-1}+x_1\ot x_{n-2}+ \cdots +x_{n-1}\ot 1,
    \end{align*}
where we set $x_0=1$.
\item \cite[Example~2.3.2]{MA} Let $Q$ be a quiver. The {\bf path algebra} of $Q$ is the associative algebra $\bfk Q =\oplus_{n=0}^{\infty}\bfk Q_n$ whose underlying module has its basis the set of all paths $a_1a_2\cdots a_n$ of length $n\geq 0$ in $Q$. The multiplication $\ast$ of two paths $a_1a_2\cdots a_n$ and $b_1b_2\cdots b_m$ is defined by
\begin{align*}
(a_1a_2\cdots a_n)\ast(b_1b_2\cdots b_m):=\delta_{t(a_n),s(b_1)}a_1a_2\cdots a_nb_1b_2\cdots b_m,
\end{align*}
where $\delta_{t(a_n),s(b_1)}$ is the Kronecker delta. The path algebra $(\mathbf{k}Q, \ast, \col)$ is an $\epsilon$-bialgebra of weight zero with the coproduct defined by
\begin{align*}
\Delta(e):&=0 \, \text { for } e\in Q_0 \\
\Delta(a):&=s(a)\ot t(a)\,\,\, \text { for } a\in Q_1, \text{and}\\
\Delta(a_1a_2\cdots a_n):=s(a_{1})\ot a_2 \cdot\cdots a_n&+a_1\cdots a_{n-1}\ot t(a_n)+\sum_{i=1}^{n-2}a_1\cdots a_i\ot a_{i+2}\cdots a_n \text { for } n\geq 2.
\end{align*}

\item \cite[Section~2.3]{LR06}\label{exam:tensor}
Let $V$ denote a vector space. Recall that the {\bf tensor algebra} $T(V)$ over $V$ is the tensor module,
\begin{align*}
T(V)=\bfk \oplus V\oplus V^{\ot 2}\oplus \cdots \oplus V^{\ot n}\oplus \cdots,
\end{align*}
equipped with the associative multiplication $m_{T}$ called concatenation defined by
\begin{align*}
v_1\cdots v_i\ot v_{i+1}\cdots v_n \mapsto v_1\cdots v_i v_{i+1}\cdots v_n \, \text{ for }\, 0 \leq i\leq n,
\end{align*}
and with the convention that $v_1v_0=1$ and $v_{n+1}v_{n}=1$. The
tensor algebra $T(V)$ is an $\epsilon$-unitary bialgebra of weight $-1$ with the coassociative coproduct defined by
\begin{align*}
\Delta(v_1\cdots v_n):=\sum_{i=0}^{n}v_1\cdots v_i\ot v_{i+1}\cdots v_n.
\end{align*}

\item \cite[Example~2.4]{ZGL18}
The {\bf polynomial algebra} $\bfk [x]$ is an $\epsilon$-unitary bialgebra of weight $\lambda$ with the coproduct defined by
    \begin{align}\mlabel{eq:cpol}
    \Delta(1):=-\lambda (1\ot 1) \text{ and }
    \Delta(x^n):=\sum_{i=0}^{n-1}x^{i}\ot x^{n-1-i}+\lambda \sum_{i=1}^{n-1}x^{i}\ot x^{n-i} \quad \text { for } n\geq 1.
    \end{align}
When $\lambda=0$, this $\epsilon$-unitary bialgebra of weight  zero is precisely the Newtonian
coalgebra on $\bfk [x]$, which was constructed and studied by Hirschhorn and Raphael~\mcite{HR92}.

 \item \cite[Section~1.4]{Foi08} Let $(A, m, 1,\Delta, \varepsilon, c)$ be a {\bf braided bialgebra} with $A = \bfk\oplus \ker\varepsilon$ and the braiding
$c:A\ot A \to A\ot A$ given by
\begin{align*}
c:\left\{
    \begin{array}{ll}
    1\ot 1 \mapsto 1\ot 1,\\
   a\ot 1\mapsto 1\ot a, \\
    1\ot b \mapsto b\ot 1,\\
    a\ot b \mapsto 0,
    \end{array}
    \right.
\end{align*}
where $a,b\in \ker \varepsilon$ and $\lambda\in \bfk$.
Then $(A, m, 1,\Delta)$ is an $\epsilon$-unitary bialgebra of weight $-1$.

\end{enumerate}
\end{exam}

\subsection{Infinitesimal unitary Hopf algebras under the view of Aguiar}
Denote by $\Hom_{\bfk}(A,B)$ the set of linear maps from $A$ to $B$ throughout the remainder of this subsection.

\begin{defn}~\cite[Chapter~4.1]{CP94}
Let $A=(A, m, 1,\Delta, \varepsilon)$ be a classical bialgebra. Then the convolution product $\ast $ on $\Hom_{\bfk}(A,A)$ defined to be the composition:
\begin{align*}
f\ast g :=m (f\ot g)  \Delta \, \text { for }\,  f,g\in \Hom_{\bfk}(A,A),
\end{align*}
and the triple $(\Hom_{\bfk}(A,A), \ast, 1\circ \varepsilon)$ is called a convolution algebra, where $1\circ \varepsilon$ is the unit with respect to $\ast$.
The antipode $S$ is defined to be the inverse of the identity map with respect to the convolution product.
\end{defn}

\begin{remark}
The question facing us is whether we can define the antipode for an $\epsilon$-unitary bialgebra of weight zero as one does for classical bialgebras. Aguiar~\cite[Remark~2.2]{MA} answered this question ``No'' in the case of weight zero due to the lack of the unit $1\circ \varepsilon$ with respect to $\ast$, see Remark~\mref{remk:unit1}~(\mref{remk:b}).
\end{remark}
To equip the $\epsilon$-bialgebra of weight zero with an antipode,  Aguiar~\mcite{MA} introduced the notion of a circular convolution.

\begin{defn}~\cite[Section~3]{MA}
Let $(A, m, \Delta)$ be an $\epsilon$-bialgebra of weight zero. Then the {\bf circular convolution} $\circledast$ on $\Hom_{\bfk}(A,A)$ defined by
$$f\circledast g :=f\ast g+f+g \, \text {, that is, } \, (f\circledast g)(a) :=\sum_{(a)}f(a_{(1)})g({a_{(2)}})+f(a)+g(a)\, \text{ for } a\in A.$$
Note that $f\circledast 0 = f = 0\circledast f$ and so $0\in \Hom_{\bfk}(A,A)$ is the unit with respect to the circular convolution $\circledast$.
\end{defn}

Further Aguiar~\cite{MA} introduced the concept of an infinitesimal Hopf algebra via circular convolution.

\begin{defn}~\cite[Definition~3.1]{MA}
 An infinitesimal bialgebra $(A, m, \Delta)$ of weight zero is called an {\bf infinitesimal Hopf algebra} (abbreviated {\bf $\epsilon$-Hopf algebra}) if the identity map $\id\in \Hom_{\bfk}(A,A)$ is invertible with respect to the circular convolution $\circledast$. In this case, the inverse $S\in \Hom_{\bfk}(A,A)$ of $\id$ is called the {\bf antipode} of $A$. It is characterized by
\begin{equation*}
\sum_{(a)}S(a_{(1)})a_{(2)}+S(a)+a=0=\sum_{(a)}a_{(1)}S(a_{(2)})+a+S(a) \, \text{ for }\, a \in A.
\mlabel{eq:ants}
\end{equation*}
Here $\Delta(a) = \sum_{(a)} a_{(1)} \ot a_{(2)}$.
If further $(A, m, 1, \Delta)$ is a unitary algebra, then $(A, m, 1, \Delta)$ is called an {\bf infinitesimal unitary Hopf algebra}.
\mlabel{de:deha}
\end{defn}

The $\epsilon$-unitary Hopf algebra satisfies many properties analogous to those of a classical Hopf algebra~\cite[Propositions~3.7, 3.12]{MA}.

\begin{remark}
\begin{enumerate}
\item Let $(A, m, \Delta)$ be an $\epsilon$-unitary Hopf algebra with antipode $S$. Then
\begin{align*}
S(xy)=-S(x)S(y) \text{ and }\sum_{(x)} S(x_{(1)})\ot S(x_{(2)})=-\sum_{(S(x))}S(x)_{(1)} \ot S(x)_{(2)} =-\Delta S(x).
\end{align*}

\item If $(A,m,\Delta)$ is an $\epsilon$-unitary Hopf algebra with antipode $S$, then so is $(A,m^{\mathrm{op}}, \Delta^{\mathrm{cop}})$ with the same antipode $S$.

\item It follows from  Eq.~(\mref{eq:ants}) that $S(1)=-1$ by taking $a=1$.
\end{enumerate}
\end{remark}

\begin{defn}\cite[Section~4]{MA}
Let $A$ be an algebra and $C$ a coalgebra. The map $f:C\to A$ is called {\bf locally nilpotent} with respect to convolution $*$
if for each $c\in C$, there is some $n\geq 1$ such that
\begin{equation}\mlabel{eq:lnil}
f^{\ast (n)}(c) :=\sum_{(c)}f(c_{(1)})f(c_{(2)})\cdots f(c_{(n+1)})=0,
\end{equation}
where $c_{(1)}, \cdots, c_{(n+1)}$ are from the Sweedler notation $\Delta^n(c) = \sum_{(c)} c_{(1)} \ot \cdots \ot c_{(n+1)}$
and $f^{\ast n}$ is defined inductively by
\begin{align*}
f^{\ast (1)}(c) :=\sum_{(c)}f(c_{(1)})f(c_{(2)})\, \text{ and }\,
f^{\ast (n)}:=f^{\ast (n-1)}\ast f.
\end{align*}
\end{defn}

Denote by $\mathbb{R}$ and $\mathbb{C}$ the field of real numbers and the field of complex numbers, respectively.

\begin{lemma}\cite[Proposition~4.5]{MA}
Let $(A,\,m,\,\Delta)$ be an $\epsilon$-bialgebra of weight zero and $D :=m\Delta$, and let $\QQ\subseteq \bfk$. Suppose that either
\begin{enumerate}
\item
$\bfk = \mathbb{R}$ or $\mathbb{C}$ and $A$ is finite dimensional, or
\item
$D$ is locally nilpotent and char$(\bfk)=0$.
\end{enumerate}
Then $A$ is an $\epsilon$-Hopf algebra with bijective antipode $S=-\sum_{n=0}^{\infty}\frac{1}{n!}(-D)^{n}$.
\mlabel{lem:rt3}
\end{lemma}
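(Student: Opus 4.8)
The plan is to reduce the existence of the circular--convolution inverse of $\id$ to an ordinary convolution computation and then to evaluate that inverse explicitly in terms of $D=m\Delta$. Since $\ast$ has no unit when $\lambda=0$, I would first adjoin one: form $\Hom_\bfk(A,A)^{+}=\bfk e\oplus \Hom_\bfk(A,A)$ with $e$ a formal $\ast$-unit. The assignment $f\mapsto e+f$ then satisfies $(e+f)\ast(e+g)=e+(f\ast g+f+g)=e+(f\circledast g)$ and sends $0$ to $e$, so it is a monoid isomorphism from $(\Hom_\bfk(A,A),\circledast,0)$ onto $(\{e+f\},\ast,e)$. Consequently $\id$ is $\circledast$-invertible exactly when $e+\id$ is $\ast$-invertible, and if $S$ denotes the antipode then $e+S=(e+\id)^{-1}$. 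Inverting the resulting geometric series gives, at least formally, $e+S=\sum_{n\ge 0}(-1)^{n}\id^{\ast n}$, where $\id^{\ast n}:=\underbrace{\id\ast\cdots\ast\id}_{n\ \mathrm{factors}}$ and $\id^{\ast 0}:=e$; hence $S=\sum_{n\ge 1}(-1)^{n}\id^{\ast n}$.

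The crux is to convert this into the stated exponential. First I would record that $D=m\Delta$ is a derivation of $(A,m)$: applying $m$ to the weight-zero axiom $\Delta(xy)=x\cdot\Delta(y)+\Delta(x)\cdot y$ yields $D(xy)=xD(y)+D(x)y$. By coassociativity one has $\id^{\ast n}(a)=\sum_{(a)}a_{(1)}\cdots a_{(n)}$, and applying the derivation $D$ factor by factor (each hit of $D$ on one factor splits it, via $D=m\Delta$, into two adjacent factors that then multiply, and by coassociativity each of the $n$ resulting summands reassembles to $\id^{\ast(n+1)}(a)$) gives the recursion $D\circ\id^{\ast n}=n\,\id^{\ast(n+1)}$. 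Since $\QQ\subseteq\bfk$, induction from $\id^{\ast 2}=m\Delta=D$ then produces the key identity $\id^{\ast(n+1)}=\tfrac1{n!}D^{n}$. Substituting this into the series collapses it to the claimed antipode
\[
S=\sum_{n\ge 1}(-1)^{n}\id^{\ast n}=-\sum_{k\ge 0}\frac{(-1)^{k}}{k!}D^{k}=-\sum_{n\ge 0}\frac1{n!}(-D)^{n}=-e^{-D}.
\]

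It remains to make the series rigorous and to check it really is the antipode. In case (b), local nilpotency of $D$ makes $\sum_{k}\tfrac1{k!}(-D)^{k}(a)$ terminate for each $a\in A$, so $S$ is a well-defined linear map; in case (a) the operator exponential $e^{-D}$ converges in the finite-dimensional normed algebra $\End_\bfk(A)$ over $\mathbb{R}$ or $\mathbb{C}$. In either case I would verify $\id\circledast S=\id\ast S+\id+S=0$ by the telescoping $\id\ast S=\sum_{n\ge 1}(-1)^{n}\id^{\ast(n+1)}=-\sum_{m\ge 2}(-1)^{m}\id^{\ast m}$, which gives $\id\ast S+S=-\id$; since the convolution powers of $\id$ commute ($\id^{\ast j}\ast\id^{\ast k}=\id^{\ast(j+k)}$), the same computation yields $S\circledast\id=0$, so $S$ is the two-sided antipode. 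Finally, bijectivity follows from $S=-e^{-D}$ together with $e^{-D}\circ e^{D}=\id=e^{D}\circ e^{-D}$ (a locally finite identity in case (b), the usual convergent one in case (a)), whence $S^{-1}=-e^{D}$. I expect the genuine work to lie in the identity $\id^{\ast(n+1)}=\tfrac1{n!}D^{n}$, which is exactly what turns the geometric inverse into the exponential; the convergence bookkeeping in the two hypotheses is routine but must be carried out separately.
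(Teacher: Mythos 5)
You should note first that the paper contains no proof of this lemma at all: it is quoted, with attribution, from Aguiar \cite[Proposition~4.5]{MA}, so there is no internal argument to compare against; your proposal is, in structure and substance, a correct reconstruction of Aguiar's original proof (unitalize the convolution algebra so that circular convolution becomes ordinary convolution, show $D=m\Delta$ is a derivation, prove the key identity $\id^{\ast(n+1)}=\tfrac{1}{n!}D^{n}$, and sum the geometric series for $(e+\id)^{-1}$ into $-e^{-D}$; the telescoping check of $\id\circledast S=0=S\circledast\id$ and bijectivity via $S^{-1}=-e^{D}$ are sound). The one point you should make explicit concerns case (b): the paper, following \cite[Section~4]{MA}, defines ``locally nilpotent'' through \emph{convolution} powers, $D^{\ast(n)}(a)=\sum_{(a)} D(a_{(1)})\cdots D(a_{(n+1)})=0$, whereas your termination argument for $\sum_{k}\tfrac{1}{k!}(-D)^{k}(a)$ needs the \emph{composition} powers $D^{k}(a)$ to vanish eventually. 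Your own key identity supplies the missing bridge: since $D=\id^{\ast 2}$, one has $D^{\ast(n)}=\id^{\ast(2n+2)}=\tfrac{1}{(2n+1)!}D^{2n+1}$, so $D^{\ast(n)}(a)=0$ forces $D^{2n+1}(a)=(2n+1)!\,D^{\ast(n)}(a)=0$, hence $D^{m}(a)=0$ for all $m\geq 2n+1$ and the exponential series does terminate at $a$. With that one line inserted, your proof is complete and agrees with the cited source.
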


\section{Infinitesimal unitary Hopf algebras and pre-Lie algebras on matrix algebras}\label{sec:sub}
In this section, we equip a matrix algebra $M_n(\bfk)$  with an $\epsilon$-unitary Hopf algebraic structures, in terms of a construction of the suitable coproduct. In the algebraic framework of Aguiar~\mcite{Aguu02} for $\epsilon$-(unitary) bialgebras, a pre-Lie and a new Lie algebraic structures on matrix algebras are constructed.

\subsection{An infinitesimal unitary bialgebra on matrix algebras}
In this subsection, we construct an $\epsilon$-unitary bialgebra of weight zero arising from a matrix algebra.

\begin{defn}\cite[Chapter~17]{Lam99}
Let $\bfk$ be a unitary commutative ring. A {\bf matrix algebra} $M_n(\bfk)$ is a collection of $n\times n$ matrices over $\bfk$ that form an associative algebra under matrix addition and matrix multiplication.
\end{defn}

The multiplication on $M_n(\bfk)$ will be denoted by $\frakm$. We now define a coproduct on  matrix algebra $M_n(\bfk)$  to equip it with a coalgebra structure, with an eye toward constructing an $\epsilon$-unitary bialgebra on it.

Let $E_{i j}$, $1\leq i, j\leq n$, be an  elementary matrix whose entry in the $i$-th row, $j$-th column is $1$,
and zero in all other entries. Note that $E_{ij}E_{kl}=\delta_{jk}E_{il}$. Then
 any matrix $M\in M_n(\bfk)$ can be decomposed as a linear combination of all the $n^2$  elementary matrices as follows:
 $$M=[m_{ij}]=\sum_{i, j=1}^nm_{ij}E_{ij}.$$
By linearity, we only need to define $\col(E_{ij})$ for basis elements $E_{ij}\in \mn$. Define
\begin{align}
\col (E_{ij}):=\begin{cases}\sum_{s=i}^{j-1}E_{is}\otimes E_{(s+1)j} &\text{ if } i< j,\\
0 &\text{ if }i= j,\\
-\sum_{s=j}^{i-1}E_{is}\otimes E_{(s+1)j} &\text{ if } i> j.
\end{cases}
\mlabel{eq:matrcol}
\end{align}
We observe that $M_n(\bfk)$ is closed under the coproduct $\col$.

\begin{lemma}\label{lem:comp1}
For $M, N \in M_n(\bfk)$, we have
\begin{align*}
\col(MN)=M\cdot\col(N)+\col(M)\cdot N.
\end{align*}

\end{lemma}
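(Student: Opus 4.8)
The plan is to reduce to basis elements and then exploit a telescoping cancellation. Since $\col$ and the bimodule actions of Eq.~(\mref{eq:dota}) are all $\bfk$-bilinear, it suffices to prove the identity when $M=E_{ij}$ and $N=E_{kl}$ for elementary matrices. I would first dispose of the case $j\neq k$. Here $E_{ij}E_{kl}=\delta_{jk}E_{il}=0$, so the left side vanishes; on the right, every term of $E_{ij}\cdot\col(E_{kl})$ has the form $E_{ij}E_{ks}\ot E_{(s+1)l}=\delta_{jk}E_{is}\ot E_{(s+1)l}$, and every term of $\col(E_{ij})\cdot E_{kl}$ has the form $E_{is}\ot E_{(s+1)j}E_{kl}=\delta_{jk}E_{is}\ot E_{(s+1)l}$, all of which vanish because $j\neq k$. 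Thus both sides are zero and the identity holds trivially.

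The substantive case is $j=k$, where the left side becomes $\col(E_{il})$ since $E_{ij}E_{jl}=E_{il}$. To handle the three branches of Eq.~(\mref{eq:matrcol}) uniformly, I would introduce the signed-sum convention
\[
T(p,q):=\sum_{s=p}^{q-1}E_{is}\ot E_{(s+1)l},\qquad\text{where }\ \sum_{s=p}^{q-1}:=-\sum_{s=q}^{p-1}\ \text{ when }p>q,\ \text{ and }\ T(p,p):=0.
\]
A direct inspection of Eq.~(\mref{eq:matrcol}) in each of the cases $i<l$, $i=l$, $i>l$ then shows $\col(E_{il})=T(i,l)$.

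Next I would compute the two summands on the right. Applying $E_{ij}\cdot(-)$ to $\col(E_{jl})$ uses $E_{ij}E_{js}=E_{is}$, which turns each term $E_{js}\ot E_{(s+1)l}$ into $E_{is}\ot E_{(s+1)l}$; checking the cases $j<l$, $j=l$, $j>l$ yields $E_{ij}\cdot\col(E_{jl})=T(j,l)$. Symmetrically, applying $(-)\cdot E_{jl}$ to $\col(E_{ij})$ uses $E_{(s+1)j}E_{jl}=E_{(s+1)l}$, turning each term $E_{is}\ot E_{(s+1)j}$ into $E_{is}\ot E_{(s+1)l}$, and the cases $i<j$, $i=j$, $i>j$ give $\col(E_{ij})\cdot E_{jl}=T(i,j)$.

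The conclusion then follows from additivity of the signed sum. Setting $\Phi(m):=\sum_{s<m}E_{is}\ot E_{(s+1)l}$, so that $T(p,q)=\Phi(q)-\Phi(p)$ for all integers $p,q$, we obtain
\[
T(j,l)+T(i,j)=\bigl(\Phi(l)-\Phi(j)\bigr)+\bigl(\Phi(j)-\Phi(i)\bigr)=\Phi(l)-\Phi(i)=T(i,l)=\col(E_{il}),
\]
which is exactly the desired identity. The only real obstacle is bookkeeping: ensuring that the signs produced by the three cases of Eq.~(\mref{eq:matrcol}) agree with the signed-sum convention for every relative ordering of $i,j,l$. Introducing the potential $\Phi$ collapses all of this casework into a single telescoping cancellation, which is why I would set up $T$ and $\Phi$ before comparing the two sides.
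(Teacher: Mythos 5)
Your proof is correct, and it organizes the argument differently from the paper's. Both proofs reduce to elementary matrices and dispose of the case $j\neq k$ identically, but where you diverge is in the substantive case $j=k$: the paper verifies the identity by explicit enumeration of the relative orderings of $i,j,l$ (its Case 2, split into the subcases $i\leq j\leq l$, $j<i\leq l$, $i\leq l<j$, followed by a Case 3 for $i>l$ that is only asserted to be ``similar''), recomputing the signs from Eq.~(\ref{eq:matrcol}) in each branch. You instead absorb all three branches of Eq.~(\ref{eq:matrcol}) into the single signed sum $T(p,q)$, observe that $\col(E_{il})=T(i,l)$, $E_{ij}\cdot\col(E_{jl})=T(j,l)$ and $\col(E_{ij})\cdot E_{jl}=T(i,j)$, and conclude by the Chasles-type additivity $T(i,j)+T(j,l)=T(i,l)$ coming from the potential $\Phi$. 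This buys uniformity: every ordering of $i,j,l$, including the case the paper leaves to the reader, is handled by one telescoping cancellation, and the argument makes visible \emph{why} the derivation property holds (signed interval sums add). What the paper's route buys in exchange is that it needs no auxiliary conventions whose consistency must itself be checked --- your whole proof rests on the claim that the three displayed quantities equal $T$ under the sign convention, which is exactly the same case-by-case inspection, just done once and packaged cleanly. One small presentational point: you should fix the base point of the potential explicitly, say $\Phi(m):=\sum_{s=1}^{m-1}E_{is}\ot E_{(s+1)l}$, so that the identity $T(p,q)=\Phi(q)-\Phi(p)$ is literally true for all $1\leq p,q\leq n$; as written, $\sum_{s<m}$ leaves the lower limit implicit. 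This is cosmetic, not a gap.
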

\begin{proof}
Let $E_{ij}$ and $E_{kl}$ be elementary matrices of $M$, $N$, respectively. Then it is enough to verify
\begin{align*}
\col(E_{ij}E_{kl})=E_{ij} \cdot \col(E_{kl})+\col(E_{ij}) \cdot E_{kl}\,  \text { for }\,  1 \leq i,j,k,l \leq n.
\end{align*}
We have two cases to consider.

\noindent{\bf Case 1.} $j\neq k$. In this case, by Eq.~(\mref{eq:matrcol}), we have
 $$\col(E_{ij}E_{kl})=\col(0)=0=E_{ij} \cdot \col(E_{kl})+\col(E_{ij}) \cdot E_{kl}$$

\noindent{\bf Case 2.} $j=k$, $i\leq l$. In this case, we only need to consider the following three subcases.

\noindent{\bf Subcase 2.1.} $i\leq j\leq l$. By Eq.~(\mref{eq:matrcol}), we have
\begin{align*}
E_{ij} \cdot \col(E_{kl})+\col(E_{ij}) \cdot E_{kl}&=E_{ij}\cdot\bigg(\sum_{s=k}^{l-1}E_{ks}\otimes E_{(s+1)l}\bigg)+\bigg(\sum_{s=i}^{j-1}E_{is}\otimes E_{(s+1)j} \bigg)\cdot E_{kl}\\
&=\sum_{s=j}^{l-1}E_{is}\otimes E_{(s+1)l}+\sum_{s=i}^{j-1}E_{is}\otimes E_{(s+1)l}
\\
&=\sum_{s=i}^{l-1}E_{is}\otimes E_{(s+1)l}=\col(E_{il})=\col(E_{ij}E_{kl}).
\end{align*}

\noindent{\bf Subcase 2.2.} $j<i\leq l$. By Eq.~(\mref{eq:matrcol}), we have
 \begin{align*}
 E_{ij} \cdot \col(E_{kl})+\col(E_{ij}) \cdot E_{kl}
 =&E_{ij}\cdot\bigg(\sum_{s=k}^{l-1}E_{ks}\otimes E_{(s+1),l}\bigg)-\bigg(\sum_{s=j}^{i-1}E_{is}\otimes E_{(s+1)j}\bigg)\cdot E_{kl}\\
 =&\sum_{s=j}^{l-1}E_{is}\otimes E_{(s+1)l}-\sum_{s=j}^{i-1}E_{is}\otimes E_{(s+1)l}\\
 =&\sum_{s=i}^{l-1}E_{is}\otimes E_{(s+1)l}=\col(E_{il})=\col(E_{ij}E_{kl})
 \end{align*}

\noindent{\bf Subcase 2.3.} $i\leq l<j$. By Eq.~(\mref{eq:matrcol}), we have
\begin{align*}
E_{ij} \cdot \col(E_{kl})+\col(E_{ij}) \cdot E_{kl}
=&-E_{ij}\cdot\sum_{s=l}^{k-1}E_{ks}\otimes E_{(s+1)l}+\sum_{s=i}^{j-1}E_{is}\otimes E_{(s+1)j}\cdot E_{kl}\\
=&-\sum_{s=l}^{j-1}E_{is}\otimes E_{(s+1)l}+\sum_{s=i}^{j-1}E_{is}\otimes E_{(s+1)l}\\
=&\sum_{s=i}^{l-1}E_{is}\otimes E_{(s+1)l}=\col(E_{il})=\col(E_{ij}E_{kl}).
 \end{align*}

\noindent{\bf Case 3.}$j=k$, $i> l$. It is similar to the proof of \noindent{\bf Case 2.}
\end{proof}

\begin{lemma}
The pair $(M_n(\bfk), \col)$ is a coalgebra (without counit). \mlabel{lem:coal}
\end{lemma}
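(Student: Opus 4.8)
The goal is to establish coassociativity of $\col$, that is, $(\col \ot \id)\col = (\id \ot \col)\col$ as maps $M_n(\bfk) \to M_n(\bfk)^{\ot 3}$; since counitality is explicitly excluded from the statement, there is nothing further to verify. Because $\col$ is defined $\bfk$-linearly on the basis $\{E_{ij}\}_{1\le i,j\le n}$, the plan is to check the identity only on each elementary matrix $E_{ij}$ and then split into the three cases dictated by Eq.~(\mref{eq:matrcol}): $i=j$, $i<j$, and $i>j$. The diagonal case $i=j$ is immediate, since $\col(E_{ii})=0$ forces both iterated coproducts to vanish.

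For $i<j$, I would compute each side as an explicit triple sum of elementary tensors. Applying $\col \ot \id$ to $\col(E_{ij})=\sum_{s=i}^{j-1}E_{is}\ot E_{(s+1)j}$, and using that the boundary term $\col(E_{ii})=0$ drops out while $\col(E_{is})=\sum_{t=i}^{s-1}E_{it}\ot E_{(t+1)s}$ otherwise, gives
$$(\col \ot \id)\col(E_{ij}) = \sum_{i\le t<s\le j-1} E_{it}\ot E_{(t+1)s}\ot E_{(s+1)j}.$$
Symmetrically, applying $\id \ot \col$ and discarding the boundary term $\col(E_{jj})=0$ gives
$$(\id \ot \col)\col(E_{ij}) = \sum_{i\le s<t\le j-1} E_{is}\ot E_{(s+1)t}\ot E_{(t+1)j}.$$
Renaming the two summation indices in one of these double sums shows that both range over the same set of pairs $i\le a<b\le j-1$ with the identical summand $E_{ia}\ot E_{(a+1)b}\ot E_{(b+1)j}$, whence the two sides coincide.

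The case $i>j$ runs entirely in parallel, the one new feature being that the defining minus sign now appears twice in each iterated coproduct, so the two factors of $-1$ cancel and leave a positive triple sum; the same reindexing then matches the two expressions term by term (here the relevant index set allows the diagonal pairs, since the inner applications of $\col$ now start their summation at the outer index rather than strictly below it). I expect the only genuine work to be the careful bookkeeping of the summation ranges—tracking which boundary terms are killed by the diagonal rule $\col(E_{ii})=0$, confirming that the inner applications of $\col$ never cross the diagonal into the opposite sign regime, and verifying the sign cancellation in the lower-triangular case—rather than any conceptual difficulty.
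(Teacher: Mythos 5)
Your proposal is correct and follows essentially the same route as the paper's proof: reduce to basis elements $E_{ij}$, split into the cases $i=j$, $i<j$, $i>j$, compute both iterated coproducts as explicit double sums (with the boundary terms killed by $\col(E_{ii})=0$), and identify them by exchanging the two summation indices, with the two minus signs cancelling in the lower-triangular case. Your observations about the ranges—strict inequality $i\le a<b\le j-1$ when $i<j$ versus the non-strict $j\le b\le a\le i-1$ when $i>j$, and that the inner coproducts never cross the diagonal—match exactly the index bookkeeping carried out in the paper.
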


\begin{proof}
It is enough to show the coassociative law:
\begin{align}
(\id\otimes \col)\col(E_{ij})=(\col\otimes \id)\col(E_{ij})\,\text{ for } E_{ij}\in M_n(\bfk).
\mlabel{eq:coassp}
\end{align}
When $i=j$, the Eq.~(\mref{eq:coassp}) holds trivially. By the definition of $\col$ in Eq.~(\mref{eq:matrcol}), we have two cases to consider.

\noindent{\bf Case 1.} $i<j$.
In this case, we have
\begin{align*}
(\id\otimes \col)\col(E_{ij})&=(\id\otimes \col)\left(\sum_{s=i}^{j-1}E_{is}\otimes E_{(s+1)j}\right)\quad (\text{by Eq.~(\ref{eq:matrcol})})\\
&=\sum_{s=i}^{j-1}E_{is}\otimes \col(E_{(s+1)j})=\sum_{s=i}^{j-2}E_{is}\otimes \col(E_{(s+1)j}) \quad (\text{by $\col(E_{ii})=0$ })\\
&=\sum_{s=i}^{j-2}\sum_{t=s+1}^{j-1}E_{is}\otimes E_{(s+1)t}\otimes E_{(t+1)j}=\sum_{t=i+1}^{j-1}\sum_{s=i}^{t-1}E_{is}\otimes E_{(s+1)t}\otimes E_{(t+1)j} \quad (\text{by Eq.~(\ref{eq:matrcol})})\\
&=\sum_{s=i+1}^{j-1}\sum_{t=i}^{s-1}E_{it}\otimes E_{(t+1)s}\otimes E_{(s+1)j}\quad (\text{by exchanging the index $s$ and $t$ })\\
&=\sum_{s=i+1}^{j-1}\col (E_{is})\otimes E_{(s+1)j}=\sum_{s=i}^{j-1}\col (E_{is})\otimes E_{(s+1)j}\quad (\text{by $\col(E_{ii})=0$ })\\
&=(\col\otimes \id)\left(\sum_{s=i}^{j-1}E_{is}\otimes E_{(s+1)j}\right)
=(\col\otimes \id)\col(E_{ij})\quad (\text{by Eq.~(\ref{eq:matrcol})}).
\end{align*}
\noindent{\bf Case 2.} $i>j$.
In this case, we have
\begin{align*}
(\id\otimes \col)\col(E_{ij})&=(\id\otimes \col)\left(-\sum_{s=j}^{i-1}E_{is}\otimes E_{(s+1)j}\right)\quad (\text{by Eq.~(\ref{eq:matrcol})})\\
&=-\sum_{s=j}^{i-1}E_{is}\otimes \col(E_{(s+1)j})\\
&=\sum_{s=j}^{i-1}\sum_{t=j}^{s}E_{is}\otimes E_{(s+1)t}\otimes E_{(t+1)j}
=\sum_{t=j}^{i-1}\sum_{s=t}^{i-1}E_{is}\otimes E_{(s+1)t}\otimes E_{(t+1)j} \quad (\text{by Eq.~(\ref{eq:matrcol})})\\
&=\sum_{s=j}^{i-1}\sum_{t=s}^{i-1}E_{it}\otimes E_{(t+1)s}\otimes E_{(s+1)j} \quad (\text{by exchanging the index $s$ and $t$ })\\
&=-\sum_{s=j}^{i-1}\col(E_{is})\otimes E_{(s+1)j}=-(\col\otimes \id)\left(\sum_{s=j}^{i-1}E_{is}\otimes E_{(s+1)j}\right)\\
&=(\col\otimes \id)\col(E_{ij})\quad (\text{by Eq.~(\ref{eq:matrcol})}).
\end{align*}

This completes the proof.
\end{proof}

Now we arrive at our main result in this subsection.

\begin{theorem}
The quadruple $(M_n(\bfk), \frakm, E,\col)$ is an $\epsilon$-unitary bialgebra of weight zero.
\mlabel{thm:fma}
\end{theorem}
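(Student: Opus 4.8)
The plan is to unwind Definition~\mref{def:iub} at weight $\lambda=0$ and check its three constituent requirements one at a time. A quadruple $(M_n(\bfk),\frakm,E,\col)$ is an $\epsilon$-unitary bialgebra of weight zero precisely when (i) $(M_n(\bfk),\frakm,E)$ is a unitary associative algebra, (ii) $(M_n(\bfk),\col)$ is a coassociative coalgebra, and (iii) the compatibility relation Eq.~(\mref{eq:cocycle}) holds with $\lambda=0$, namely $\col(MN)=M\cdot\col(N)+\col(M)\cdot N$ for all $M,N\in M_n(\bfk)$. So the theorem is assembled from three ingredients, two of which are the preceding lemmas.

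For (i), the matrix algebra $M_n(\bfk)$ is a unitary associative algebra under $\frakm$ with unit the identity matrix $E=\sum_{i=1}^{n}E_{ii}$; this is the standard fact recorded in the cited reference, so nothing new is needed here. For (ii), coassociativity of $\col$ is exactly the content of Lemma~\mref{lem:coal}. For (iii), Lemma~\mref{lem:comp1} furnishes $\col(MN)=M\cdot\col(N)+\col(M)\cdot N$ for all $M,N$, which is Eq.~(\mref{eq:cocycle}) specialized to $\lambda=0$. Thus all three conditions are in hand and the theorem follows by combining them. As a consistency check compatible with Remark~\mref{remk:unit1}, a unitary $\epsilon$-bialgebra of weight zero must satisfy $\col(E)=-\lambda(E\ot E)=0$; indeed $\col(E)=\sum_{i=1}^{n}\col(E_{ii})=0$ since $\col(E_{ii})=0$ for every $i$ by Eq.~(\mref{eq:matrcol}), so the chosen unit is consistent with the coproduct.

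In short, once Lemmas~\mref{lem:comp1} and~\mref{lem:coal} are available, the statement of the theorem is essentially a corollary: the bilinear extension from elementary matrices $E_{ij}$ to arbitrary $M=\sum m_{ij}E_{ij}$ is routine, since both sides of each axiom are $\bfk$-linear in their arguments. Consequently I do not expect a genuine obstacle at the level of the theorem itself. The real work has already been discharged in the two lemmas: the case analysis on the index configurations $j\neq k$ versus $j=k$ (and the subcases $i\le j\le l$, $j<i\le l$, $i\le l<j$) establishing the $\epsilon$-derivation property, and the double-sum reindexing (``exchanging the index $s$ and $t$'') establishing coassociativity. These are the steps where the precise sign conventions and summation bounds in Eq.~(\mref{eq:matrcol}) matter, and they are the crux of the construction.
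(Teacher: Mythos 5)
Your proposal is correct and matches the paper's own proof, which simply cites Lemmas~\mref{lem:comp1} and~\mref{lem:coal}; your unwinding of Definition~\mref{def:iub} into the three conditions, plus the observation that the algebra structure of $M_n(\bfk)$ with unit $E$ is standard, is exactly what that one-line proof implicitly relies on. The added consistency check $\col(E)=0$ is a nice touch but not needed, since it is automatic for any $\epsilon$-unitary bialgebra of weight zero.
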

\begin{proof}
It follows from Lemmas~\mref{lem:comp1} and ~\mref{lem:coal}.
\end{proof}

\begin{remark}\mlabel{remk:Newcoal}
\begin{enumerate}
\item Let us emphasize that this $\epsilon$-unitary bialgebra  on $M_{n}(\bfk)$ is different from the one constructed in our previous paper~\mcite{ZGZ18}. Let $L\in M_n(\bfk)$ such that $L^2=0$. Then the quadruple $(M_n(\bfk), \frakm, E,\col)$ is an $\epsilon$-unitary bialgebra of weight zero~\mcite{ZGZ18} with the coproduct
defined by
\begin{align*}
\col (M):=ML\ot L- L\ot LM \, \text{ for }\, M\in M_n(\bfk).
\end{align*}
\item In order to distinguish these two $\epsilon$-unitary bialgebras of weight zero on $M_{n}(\bfk)$, we call the $\epsilon$-unitary bialgebra of weight zero on $M_{n}(\bfk)$ with the coproduct defined by Eq.~(\mref{eq:matrcol}) a {\bf Newtonian comatrix coalgebra}.
\item We would like to emphasize that there will be a classification of $\epsilon$-unitary bialgebraic structures (weight zero) on  matrix algebras. Since $\epsilon$-unitary bialgebras of weight zero is closely related to associative Yang-Baxter equations, such a classification problem is related to the classification of  solutions of the associative Yang-Baxter equation on matrix algebras.

\end{enumerate}
 \end{remark}

\begin{exam}\mlabel{exam:matrix}
Consider the matrix algebra $M_{2}(\bfk)$. By the definition of $\col$ in Eq.~(\mref{eq:matrcol}), we have
\[\col
\begin{bmatrix}
1&0\\
1&0
\end{bmatrix}
=\col
\begin{bmatrix}
1&0\\
0&0
\end{bmatrix}
+\col
\begin{bmatrix}
0&0\\
1&0
\end{bmatrix}
=-\begin{bmatrix}
0&0\\
1&0
\end{bmatrix}\otimes
\begin{bmatrix}
0&0\\
1&0
\end{bmatrix}
\] and
\[
\col \begin{bmatrix}0&1\\0&1\end{bmatrix}
=\col \begin{bmatrix}0&1\\0&0\end{bmatrix}+\col \begin{bmatrix}0&0\\0&1\end{bmatrix}
=\begin{bmatrix}1&0\\0&0\end{bmatrix}\otimes \begin{bmatrix}0&0\\0&1\end{bmatrix}
.\]
A directly calculation shows that
\[
(\col\ot \id)\col \begin{bmatrix}1&0\\1&0\end{bmatrix}=(\id\ot \col)\col \begin{bmatrix}1&0\\1&0\end{bmatrix}=
\begin{bmatrix}0&0\\1&0\end{bmatrix}\otimes \begin{bmatrix}0&0\\1&0\end{bmatrix}\otimes \begin{bmatrix}0&0\\1&0\end{bmatrix}
\]
and
\[
(\col\ot \id)\col \begin{bmatrix}0&1\\0&1\end{bmatrix}
=(\id\ot \col)\col \begin{bmatrix}0&1\\0&1\end{bmatrix}=0.
\]
Moreover
\begin{align*}
\col\left(\begin{bmatrix}1&0\\1&0\end{bmatrix}\right)\cdot\begin{bmatrix}0&1\\0&1\end{bmatrix}+
\begin{bmatrix}1&0\\1&0\end{bmatrix} \cdot\col\left(\begin{bmatrix}0&1\\0&1\end{bmatrix}\right)&=
-\begin{bmatrix}0&0\\1&0\end{bmatrix}\otimes\begin{bmatrix}0&0\\0&1\end{bmatrix}+
\begin{bmatrix}1&0\\1&0\end{bmatrix}\otimes\begin{bmatrix}0&0\\0&1\end{bmatrix}\\
&=\begin{bmatrix}1&0\\0&0\end{bmatrix}\otimes\begin{bmatrix}0&0\\0&1\end{bmatrix}
=\col \begin{bmatrix}0&1\\0&1\end{bmatrix}\\
&=\col\left(\begin{bmatrix}1&0
\\1&0\end{bmatrix} \cdot \begin{bmatrix}0&1\\0&1\end{bmatrix}\right).
\end{align*}
\end{exam}

\subsection{An infinitesimal unitary Hopf algebra on matrix algebras}
In this subsection, we equip the $\epsilon$-unitary bialgebra  $(M_n(\bfk), \frakm, E,\col)$ of weight zero with an antipode such that it is further an $\epsilon$-unitary Hopf algebra, under the view of Aguiar~\mcite{MA}.

\begin{lemma}
Let $(M_n(\bfk), \frakm,E,\col)$ be the $\epsilon$-unitary bialgebra of weight zero in Theorem~\mref{thm:fma} and
$$D:= \frakm \col: M_n(\bfk) \to M_n(\bfk).$$
Then for each $k\geq 0$ and $M\in M_n(\bfk)$,  $D^{\ast (k+1)}(M_n(\bfk))=0$ and so $D$ is locally nilpotent.
\mlabel{lem:rt12}
\end{lemma}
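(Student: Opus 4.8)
The plan is to prove the stronger fact that the operator $D=\frakm\col$ is identically zero on $\mn$, after which both assertions of the lemma---the vanishing of every convolution power $D^{\ast(k+1)}$ and the local nilpotency of $D$---follow at once. So the whole proof reduces to one computation on the basis of elementary matrices plus an unwinding of the definition of convolution powers.

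First I would evaluate $D$ on the $\bfk$-basis $\{E_{ij}\}$, which by linearity determines $D$ completely. Using the three cases of Eq.~(\mref{eq:matrcol}) and the elementary identity $E_{is}E_{(s+1)j}=\delta_{s,s+1}E_{ij}=0$---the point being that the column index $s$ of the left factor is never equal to the row index $s+1$ of the right factor---I get, for $i<j$,
\begin{align*}
D(E_{ij})=\frakm\col(E_{ij})=\frakm\Big(\sum_{s=i}^{j-1}E_{is}\ot E_{(s+1)j}\Big)=\sum_{s=i}^{j-1}E_{is}E_{(s+1)j}=0.
\end{align*}
The case $i>j$ is identical up to the global minus sign coming from Eq.~(\mref{eq:matrcol}), and $D(E_{ii})=\frakm(0)=0$ since $\col(E_{ii})=0$. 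Hence $D(E_{ij})=0$ for every basis element, and by linearity $D=0$.

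Next I would feed this into the definition of the convolution powers. For any $M\in\mn$ and any $k\ge 0$, the iterated coproduct $\col^{k+1}(M)=\sum_{(M)}M_{(1)}\ot\cdots\ot M_{(k+2)}$ is well defined because $(\mn,\col)$ is coassociative by Lemma~\mref{lem:coal}, and by definition
\begin{align*}
D^{\ast(k+1)}(M)=\sum_{(M)}D(M_{(1)})D(M_{(2)})\cdots D(M_{(k+2)}).
\end{align*}
Since $D=0$, every factor $D(M_{(r)})$ vanishes, so each summand is zero and therefore $D^{\ast(k+1)}(M)=0$ for all $k\ge 0$ and all $M\in\mn$. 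In particular, for each $M$ one may take $n=1$ in Eq.~(\mref{eq:lnil}), so $D$ is locally nilpotent. I do not expect any real obstacle here: the entire content is the observation that $\col$ was designed so that its two tensor legs always carry non-matching inner indices $s$ and $s+1$, which forces $\frakm\col$ to annihilate each elementary matrix; the only thing needing a little care is the bookkeeping of the summation ranges and the global sign across the three cases of Eq.~(\mref{eq:matrcol}), which is routine and does not affect the vanishing.
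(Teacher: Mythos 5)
Your proposal is correct and follows essentially the same route as the paper: both hinge on the computation $D(E_{ij})=\frakm\col(E_{ij})=0$ from the mismatched inner indices $s$ and $s+1$, and then conclude that all convolution powers vanish (the paper phrases this last step as an induction on $k$, while you unwind the convolution power directly via coassociativity, a negligible difference). No gaps.
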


\begin{proof}
It suffices to prove the first statement by induction on $k\geq 0$.
Using Sweedler notation, we may write
\begin{align*}
 \col(M)=\sum_{(M)}M_{(1)}\ot M_{(2)} \, \text { for }\, M\in M_n(\bfk).
\end{align*}
 For the initial step of $k=0$, it follows from Eq.~(\mref{eq:matrcol}) that
\begin{align*}
D^{*(1)}(M)=D^{*(1)}(M)=\sum_{(M)}D(M_{(1)})D(M_{(2)})=0,
\end{align*}
where the last step employs
\begin{align*}
D(E_{ij})= \frakm \col (E_{ij})=
\begin{cases}\sum_{s=i}^{j-1}E_{is} E_{(s+1)j}=0 &\text{ if } i< j,\\
0 &\text{ if }i= j,\\
-\sum_{s=j}^{i-1}E_{is} E_{(s+1)j}=0 &\text{ if } i> j.
\end{cases}
\end{align*}
Assume the result is true for $k=\ell$ for an $\ell\geq 1$, and consider the case when $k=\ell+1.$ Then
\begin{align*}
D^{\ast (\ell+1)}(M)=&\ (D^{\ast \ell}\ast D)(M)= \frakm (D^{\ast \ell}\ot D)\col(M)=\sum_{(M)}D^{\ast \ell}(M_{(1)})D(M_{(2)})=0.
\end{align*}
This completes the proof.
\end{proof}

\begin{theorem}
Let $\QQ\subseteq \bfk$. Then the quadruple $(M_n(\bfk),  \frakm,E, \col)$  is an $\epsilon$-unitary Hopf algebra
with the bijective antipode $S=-\sum_{n=0}^{\infty}\frac{1}{n!}(-D)^{n}$.
\mlabel{thm:rt13}
\end{theorem}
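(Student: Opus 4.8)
The plan is to deduce the statement directly from Aguiar's criterion recalled in Lemma~\mref{lem:rt3}, whose hypotheses have essentially already been assembled. First I would note that Theorem~\mref{thm:fma} supplies the underlying $\epsilon$-unitary bialgebra of weight zero $(M_n(\bfk), \frakm, E, \col)$, so that the operator $D = \frakm\col$ is well defined. The two conditions needed for the second alternative of Lemma~\mref{lem:rt3} are then at hand: $\mathrm{char}(\bfk)=0$ is immediate from the hypothesis $\QQ\subseteq\bfk$, and the local nilpotency of $D$ is exactly the content of Lemma~\mref{lem:rt12}. With both verified, Lemma~\mref{lem:rt3} yields that $(M_n(\bfk), \frakm, \col)$ is an $\epsilon$-Hopf algebra with bijective antipode $S = -\sum_{n=0}^{\infty}\frac{1}{n!}(-D)^{n}$; since $E$ is a unit for $\frakm$, this is in fact an $\epsilon$-unitary Hopf algebra, which is the assertion.

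The one point worth isolating explicitly is that the proof of Lemma~\mref{lem:rt12} shows more than local nilpotency: it gives $D(E_{ij}) = \frakm\col(E_{ij}) = 0$ for every basis matrix $E_{ij}$, since each summand satisfies $E_{is}E_{(s+1)j} = \delta_{s,s+1}E_{ij} = 0$. Hence $D$ is the zero operator on $M_n(\bfk)$, which collapses the antipode series to its $n=0$ term, giving $S = -\id$. I would record this explicit form, both as a check on the bookkeeping of the series and because it makes bijectivity transparent, $S$ being its own compositional inverse ($S\circ S = \id$).

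As a self-contained alternative that sidesteps the general series of Lemma~\mref{lem:rt3}, I could instead verify directly that $S := -\id$ satisfies the defining identities of Definition~\mref{de:deha}. Using $\frakm\col = D = 0$, one computes
\begin{align*}
\sum_{(a)}S(a_{(1)})a_{(2)} + S(a) + a = -\sum_{(a)}a_{(1)}a_{(2)} + (-a) + a = -\frakm\col(a) = -D(a) = 0,
\end{align*}
and symmetrically $\sum_{(a)}a_{(1)}S(a_{(2)}) + a + S(a) = 0$, so $\id$ is invertible under the circular convolution $\circledast$ with inverse $-\id$. I anticipate no genuine obstacle: the substance has been absorbed into Theorem~\mref{thm:fma} and Lemma~\mref{lem:rt12}, and the only care required is the correct reading of the $n=0$ term of the series (which yields $S=-\id$, not $+\id$) together with the observation that the unit $E$ upgrades the conclusion from $\epsilon$-Hopf to $\epsilon$-unitary Hopf.
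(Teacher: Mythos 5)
Your proposal is correct, and its first paragraph is precisely the paper's own proof: invoke Theorem~\mref{thm:fma} for the $\epsilon$-unitary bialgebra structure, check the hypotheses of Lemma~\mref{lem:rt3}(b) via $\QQ\subseteq\bfk$ and Lemma~\mref{lem:rt12}, and then pass from $\epsilon$-Hopf to $\epsilon$-unitary Hopf using the unit $E$ (the paper cites Definition~\mref{de:deha} for this last step). What you add goes beyond the paper and is worth having: the observation that $D=\frakm\col$ is not merely locally nilpotent but identically zero, since every summand $E_{is}E_{(s+1)j}=\delta_{s,s+1}E_{ij}$ vanishes, so the antipode series collapses to $S=-\id$, whose bijectivity is then immediate from $S\circ S=\id$. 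Your alternative self-contained argument, verifying directly that $-\id$ satisfies the circular-convolution identities
\begin{align*}
\sum_{(a)}S(a_{(1)})a_{(2)}+S(a)+a=-\frakm\col(a)=-D(a)=0,
\end{align*}
is a genuinely more elementary route: it bypasses Aguiar's general criterion (Lemma~\mref{lem:rt3}) entirely, at the mild cost of not illustrating the general machinery. The paper's route buys uniformity (the same lemma handles any locally nilpotent $D$), while yours buys transparency: it exhibits the antipode of $(M_n(\bfk),\frakm,E,\col)$ in closed form, a fact the paper's statement leaves hidden inside the series.
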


\begin{proof}
By Theorem~\mref{thm:fma}, $(M_n(\bfk), \frakm,E, \col)$ is an $\epsilon$-unitary bialgebra of weight zero.
From Lemmas~~\mref{lem:rt3} and~\mref{lem:rt12}, $(M_n(\bfk),  \frakm,\col)$ is an $\epsilon$-Hopf algebra
with bijective antipode $S=-\sum_{n=0}^{\infty}\frac{1}{n!}(-D)^{n}$.
Then the result follows from Definition~\mref{de:deha}.
\end{proof}

\subsection{A pre-Lie and a new Lie algebraic structures on matrix algebras}
In this subsection, we first recall the concept of  pre-Lie algebras and  the connection between $\epsilon$-unitary bialgebras of weight zero and pre-Lie algebras. We then   give a pre-Lie algebraic structure on a matrix algebra. Consequently, a new Lie algebraic structure on a matrix algebra is induced.

\begin{defn}~\cite{Man11}
A {\bf (left) pre-Lie algebra} is a $\bfk $-module $A$ together with a binary linear operation $\rhd: A\ot A \rightarrow A$ satisfying
\begin{align}
(a\rhd b)\rhd c-a\rhd (b\rhd c)=(b\rhd a)\rhd c-b\rhd (a\rhd c)\, \text{ for }\, a,b,c\in A.
\mlabel{eq:lpre}
\end{align}
\end{defn}

\begin{exam}
Here are two well-known pre-Lie algebras on dendriform dialgebras and Rota-Baxter algebras, respectively.
\begin{enumerate}
\item Let $(A, \prec, \succ )$ be a dendriform dialgebra. Then the multiplication $\star$ defined by $a\star b=a\prec b+a\succ b$ gives an associative algebra. In addition, define
    \begin{align*}
\rhd: A\ot A \to A, \, a\ot b \mapsto  a\succ b-a\prec b\, \text { for }\, a, b \in A
\end{align*}
Then $A$ together with $\rhd$ is a pre-Lie algebra~\mcite{Aguu02}.

\item Let $(A, P)$ be a Rota-Baxter algebra of weight $\lambda$. If the weight $\lambda=0$, then the binary operation
\begin{align*}
    \rhd: A\ot A \to A, \, a\ot b \mapsto P(a)\cdot b-b\cdot P(a)\, \text{ for }\, a, b \in A,
\end{align*}
defines a pre-Lie algebra. If the weight $\lambda=-1$, then the binary operation
\begin{align*}
    \rhd: A\ot A \to A, \, a\ot b \mapsto P(a)\cdot b-b\cdot P(a)-x\cdot y \, \text{ for } \, a, b \in A,
\end{align*}
defines a pre-Lie algebra~\mcite{AB08}.
\end{enumerate}
\end{exam}

Let $(A, \rhd)$ be a pre-Lie algebra. For any $a\in A$, let
$$L_a:A\rightarrow A, \quad b\mapsto  a \rhd b$$
be the left multiplication operator. Let
$$L:A\rightarrow \Hom_{\bfk}(A, A),\quad a\mapsto L_a.$$
The close relation between pre-Lie algebras and Lie algebras is characterized by the following two fundamental properties.

\begin{lemma}
\begin{enumerate}
\item
\cite[Theorem~1]{Ger63}
Let $(A, \rhd)$ be a pre-Lie algebra. Define for elements in $A$ a new multiplication by setting
\begin{align}
[a, b] :=a\rhd b-b\rhd a\,\text{ for }\, a,b\in A.
\mlabel{eq:lie1}
\end{align}
Then $(A, [_{-}, _{-}])$ is a Lie algebra.
\mlabel{lem:preL1}
\item \cite[Proposition~1.2]{Bai}
Eq.~(\mref{eq:lpre}) rewrites as
\begin{align*}
L_{[a,b]}=L_a\circ L_b -  L_b\circ L_a=[L_a, L_b],
\end{align*}
which implies that $L: (A, [_{-}, _{-}]) \rightarrow \Hom_{\bfk}(A, A)$ with $a\mapsto L_a$ gives a representation of the Lie algebra $(A, [_{-}, _{-}])$.
\mlabel{lem:preL2}
\end{enumerate}
\mlabel{lem:preL}
\end{lemma}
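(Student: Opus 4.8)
The plan is to treat the two parts separately, both hinging on a single reading of the defining identity Eq.~(\mref{eq:lpre}): writing the \emph{associator} $(a,b,c) := (a\rhd b)\rhd c - a\rhd(b\rhd c)$, the pre-Lie axiom says precisely that $(a,b,c) = (b,a,c)$, i.e. the associator is symmetric in its first two slots. For part~(\mref{lem:preL1}), I would first observe that the bracket $[a,b] := a\rhd b - b\rhd a$ from Eq.~(\mref{eq:lie1}) is $\bfk$-bilinear (inherited from $\rhd$) and antisymmetric by construction, since $[b,a] = b\rhd a - a\rhd b = -[a,b]$. What remains is the Jacobi identity $[[a,b],c] + [[b,c],a] + [[c,a],b] = 0$.

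To handle the Jacobi identity I would expand $[[a,b],c]$ into its four terms $(a\rhd b)\rhd c - (b\rhd a)\rhd c - c\rhd(a\rhd b) + c\rhd(b\rhd a)$ and then rewrite the two outer-left products via the associator as $(a,b,c) + a\rhd(b\rhd c)$ and $(b,a,c) + b\rhd(a\rhd c)$. The symmetry $(a,b,c) = (b,a,c)$ cancels the associator contributions, leaving the fully left-acting expression $[[a,b],c] = a\rhd(b\rhd c) - b\rhd(a\rhd c) - c\rhd(a\rhd b) + c\rhd(b\rhd a)$. Summing this over the three cyclic permutations of $(a,b,c)$, each of the six monomials of the form $x\rhd(y\rhd z)$ occurs exactly twice with opposite signs, so the cyclic sum telescopes to $0$; this telescoping is the heart of the argument.

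For part~(\mref{lem:preL2}) I would simply evaluate both sides on an arbitrary $c$: one has $L_{[a,b]}(c) = (a\rhd b - b\rhd a)\rhd c$ and $[L_a,L_b](c) = L_a\circ L_b(c) - L_b\circ L_a(c) = a\rhd(b\rhd c) - b\rhd(a\rhd c)$, so their difference is exactly $(a,b,c) - (b,a,c)$, which vanishes by Eq.~(\mref{eq:lpre}); hence $L_{[a,b]} = [L_a,L_b]$. Since a representation of the Lie algebra $(A,[_{-},_{-}])$ is by definition a Lie algebra homomorphism into $(\Hom_\bfk(A,A),[_{-},_{-}])$, this identity is precisely the assertion that $L$ is such a representation. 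The only real obstacle is organizational: the pre-Lie symmetry must be applied to collapse the associator terms \emph{before} forming the cyclic sum, since otherwise one is left tracking twelve signed monomials by hand, which is error-prone.
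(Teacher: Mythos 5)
Your proposal is correct. Note, however, that the paper offers no proof of this lemma at all: both parts are quoted from the literature, part (a) from Gerstenhaber \cite[Theorem~1]{Ger63} and part (b) from Bai \cite[Proposition~1.2]{Bai}, so there is no in-paper argument to compare against. Your argument --- reading Eq.~(\mref{eq:lpre}) as symmetry of the associator $(a,b,c)=(a\rhd b)\rhd c - a\rhd(b\rhd c)$ in its first two slots, using that symmetry to reduce $[[a,b],c]$ to the four purely left-nested terms, and then observing that the cyclic sum of the resulting six monomials $x\rhd(y\rhd z)$ cancels in pairs --- is precisely the standard classical proof, and your verification of $L_{[a,b]}=[L_a,L_b]$ for part (b) is the routine pointwise computation; both are complete and correct.
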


\begin{remark}
By Lemma~\mref{lem:preL}, a pre-Lie algebra induces a Lie algebra whose left multiplication operators give a representation of the associated commutator Lie algebra.
\end{remark}

Let $(A, m, 1, \Delta)$ be an $\epsilon$-unitary bialgebra of weight zero. Define
\begin{align}
\rhd: A\ot A \to A, \, a\ot b \mapsto a\rhd b:=\sum_{(b)}b_{(1)}a b_{(2)},
\mlabel{eq:preope}
\end{align}
where $b_{(1)}$ and $b_{(2)}$ are from the Sweedler notation $\Delta (b)=\sum_{(b)}b_{(1)}\ot b_{(2)}$.
The following result captures the connection from  $\epsilon$-unitary bialgebras of weigh zero to pre-Lie algebras~\mcite{Aguu02}.

\begin{lemma}~\mcite{Aguu02}
Let $(A, m, 1, \Delta)$ be an $\epsilon$-unitary bialgebra of weight zero.
Then $A$ equipped with the $\rhd$ in Eq.~(\mref{eq:preope}) is a pre-Lie algebra.
\mlabel{thm:preL}
\end{lemma}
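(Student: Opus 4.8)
The plan is to verify the left pre-Lie identity \eqref{eq:lpre} directly, by showing that the associator $\mathrm{as}(a,b,c):=(a\rhd b)\rhd c-a\rhd(b\rhd c)$ is invariant under interchanging $a$ and $b$; since \eqref{eq:lpre} is precisely the statement $\mathrm{as}(a,b,c)=\mathrm{as}(b,a,c)$, this suffices. Throughout I would work in Sweedler notation, writing $\Delta(c)=\sum_{(c)}c_{(1)}\ot c_{(2)}$ and, via coassociativity, $\Delta^2(c)=\sum_{(c)}c_{(1)}\ot c_{(2)}\ot c_{(3)}$, and I would use the bimodule action \eqref{eq:dota} freely.

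First I would expand the "outer-then-inner" term. Because $a\rhd b=\sum_{(b)}b_{(1)}ab_{(2)}$ depends only on $\Delta(b)$, one reads off immediately
\[
(a\rhd b)\rhd c=\sum_{(b),(c)}c_{(1)}\,b_{(1)}\,a\,b_{(2)}\,c_{(2)}.
\]
The only genuinely substantive step is the computation of $\Delta(b\rhd c)$, which is needed to expand $a\rhd(b\rhd c)=\sum_{(b\rhd c)}(b\rhd c)_{(1)}\,a\,(b\rhd c)_{(2)}$. Here I would invoke that $\Delta$ is a weight-zero derivation, that is Eq.~\eqref{eq:cocycle} with $\lambda=0$, applied to the triple product $c_{(1)}\,b\,c_{(2)}$. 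Writing $c_{(1)}bc_{(2)}=c_{(1)}\cdot(bc_{(2)})$ and applying the derivation rule twice produces three summands; re-indexing each via coassociativity into the three-fold coproduct of $c$, I expect to obtain
\[
\Delta(b\rhd c)=\sum_{(c)}c_{(1)}bc_{(2)}\ot c_{(3)}+\sum_{(b),(c)}c_{(1)}b_{(1)}\ot b_{(2)}c_{(2)}+\sum_{(c)}c_{(1)}\ot c_{(2)}bc_{(3)}.
\]

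Feeding this into the formula for $a\rhd(b\rhd c)$ yields three matching terms, whose middle summand is exactly $\sum_{(b),(c)}c_{(1)}b_{(1)}\,a\,b_{(2)}c_{(2)}$, i.e.\ precisely $(a\rhd b)\rhd c$. Hence in the associator this middle term cancels, leaving
\[
\mathrm{as}(a,b,c)=-\sum_{(c)}c_{(1)}bc_{(2)}\,a\,c_{(3)}-\sum_{(c)}c_{(1)}\,a\,c_{(2)}bc_{(3)}.
\]
This expression is manifestly invariant under $a\leftrightarrow b$, so $\mathrm{as}(a,b,c)=\mathrm{as}(b,a,c)$ and \eqref{eq:lpre} follows. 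The main obstacle is entirely in the bookkeeping for $\Delta(b\rhd c)$: one must track the left versus right bimodule actions of \eqref{eq:dota} and re-index three separate terms through coassociativity without error. Once that formula is established, the cancellation and the resulting $a\leftrightarrow b$ symmetry are immediate, and no case analysis or further structure is required.
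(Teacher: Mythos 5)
Your proof is correct: the expansion of $(a\rhd b)\rhd c$, the computation of $\Delta(b\rhd c)$ via the weight-zero derivation rule applied to $c_{(1)}bc_{(2)}$ together with coassociative re-indexing, the cancellation of the middle term, and the manifest symmetry of the resulting associator under $a\leftrightarrow b$ all check out. The paper gives no proof of this lemma (it simply cites Aguiar), and your argument is essentially Aguiar's original one; the only remark worth adding is that the unit is never used, so the result holds for any $\epsilon$-bialgebra of weight zero.
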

\begin{remark}
By Aguiar's construction about the pre-Lie product,  a pre-Lie algebra from a weighted infinitesimal unitary bialgebra was derived in~\mcite{GZ}.
\end{remark}

We now  give a pre-Lie algebraic structure on matrix algebra $M_n(\bfk)$.
By linearity, we only need to define $E_{ij}\rhd_{\varepsilon}E_{kl}$ for basis elements $E_{ij}, E_{kl}\in \mn$. By Theorem~\mref{thm:fma}, $(M_n(\bfk), m, E,\col)$ is an $\epsilon$-unitary bialgebra of weight zero. Applying Theorem~\mref{thm:preL} and Eq.~(\mref{eq:preope}),  we define
\begin{align}
\rhd_{\epsilon}: M_n(\bfk) \ot M_n(\bfk) \rightarrow M_n(\bfk),\quad E_{ij}\rhd_{\epsilon} E_{kl}: =\sum_{(E_{kl})}E_{kl(1)}E_{ij} E_{kl(2)},
\mlabel{eq:preid}
\end{align}
where $E_{kl(1)}$ and $E_{kl(2)}$ are from  $\col(E_{kl})=\sum_{(E_{kl})}E_{kl(1)}\ot E_{kl(1)}$.

\begin{theorem}\mlabel{thm:preope11}
The pair $(M_n(\bfk), \rhd_{\epsilon})$ is a pre-Lie algebra and so
$(M_n(\bfk), [_{-}, _{-}]_{\epsilon})$ is a Lie algebra, where the Lie bracket given by
\begin{align}
[E_{ij}, E_{kl}]_{\epsilon}=\begin{cases}
\text{$\mathrm{sgn}$}(l-k)E_{kl} &\text{$\mathrm{if}$}\, j=i+1,l\neq k+1, (i-k+0.5)(i-l+0.5)<0,\\
\text{$\mathrm{sgn}$}(i-j)E_{ij} &\text{$\mathrm{if}$}\,j\neq i+1, l=k+1,(k-i+0.5)(k-j+0.5)< 0,\\
0& \text{$\mathrm{otherwise}$}.
\end{cases}
\mlabel{eq:lie}
\end{align}
\end{theorem}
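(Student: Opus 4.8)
The first assertion costs us nothing. By Theorem~\mref{thm:fma} the quadruple $(\mn, \frakm, E, \col)$ is an $\epsilon$-unitary bialgebra of weight zero, and the product $\rhd_{\epsilon}$ defined in Eq.~(\mref{eq:preid}) is exactly the product of Eq.~(\mref{eq:preope}) specialised to this bialgebra. Hence Lemma~\mref{thm:preL} immediately shows that $(\mn, \rhd_{\epsilon})$ is a pre-Lie algebra, and Lemma~\mref{lem:preL}~(\mref{lem:preL1}) then yields the Lie algebra $(\mn, [_{-},_{-}]_{\epsilon})$ with $[E_{ij}, E_{kl}]_{\epsilon} = E_{ij}\rhd_{\epsilon}E_{kl} - E_{kl}\rhd_{\epsilon}E_{ij}$. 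The content of the theorem is therefore the explicit bracket, and the plan is to compute the one-sided product $E_{ij}\rhd_{\epsilon}E_{kl}$ and then antisymmetrise.

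To compute $E_{ij}\rhd_{\epsilon}E_{kl}$, I would substitute the coproduct of Eq.~(\mref{eq:matrcol}) into Eq.~(\mref{eq:preid}) and repeatedly apply the elementary-matrix rule $E_{ab}E_{cd}=\delta_{bc}E_{ad}$. For $k<l$ this gives $\sum_{s=k}^{l-1} E_{ks}E_{ij}E_{(s+1)l}=\sum_{s=k}^{l-1}\delta_{si}\,\delta_{j,s+1}\,E_{kl}$, a sum that survives only at the single index $s=i$, and only when $j=i+1$ together with $k\le i\le l-1$; for $k>l$ the same collapse occurs with an overall sign $-1$ and the range $l\le i\le k-1$; for $k=l$ the coproduct is zero and the product vanishes. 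Thus $E_{ij}\rhd_{\epsilon}E_{kl}$ equals $\mathrm{sgn}(l-k)E_{kl}$ when $j=i+1$ and $i$ lies strictly between $k$ and $l$, and equals $0$ otherwise.

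It remains to repackage the range condition in the half-integer form of Eq.~(\mref{eq:lie}) and to form the bracket. Since all entries are integers, $i-k+0.5$ and $i-l+0.5$ never vanish and $i-k+0.5>0\iff i\ge k$; hence $(i-k+0.5)(i-l+0.5)<0$ holds exactly when precisely one of $i\ge k$, $i\ge l$ is true, which reproduces $k\le i\le l-1$ for $k<l$ and $l\le i\le k-1$ for $k>l$. With this translation $E_{ij}\rhd_{\epsilon}E_{kl}=\mathrm{sgn}(l-k)E_{kl}$ exactly when $j=i+1$ and $(i-k+0.5)(i-l+0.5)<0$, and the symmetric computation gives $E_{kl}\rhd_{\epsilon}E_{ij}=\mathrm{sgn}(j-i)E_{ij}$ exactly when $l=k+1$ and $(k-i+0.5)(k-j+0.5)<0$. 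The first summand of the bracket is nonzero only if $j=i+1$ and the second only if $l=k+1$, so splitting on these two conditions yields the three cases of Eq.~(\mref{eq:lie}): when $j=i+1$ and $l\neq k+1$ only the first term survives, giving $\mathrm{sgn}(l-k)E_{kl}$; when $j\neq i+1$ and $l=k+1$ only the second survives, giving $-\mathrm{sgn}(j-i)E_{ij}=\mathrm{sgn}(i-j)E_{ij}$; and in all remaining configurations the bracket is $0$.

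The one place demanding care—and the main obstacle—is the overlap $j=i+1$ \emph{and} $l=k+1$, where a priori both summands could contribute. Here I would check that the surviving range conditions force a collapse: with $l=k+1$ the condition $(i-k+0.5)(i-l+0.5)<0=(i-k+0.5)(i-k-0.5)<0$ forces $i=k$, and symmetrically $(k-i+0.5)(k-j+0.5)<0$ forces $k=i$, so each term is nonzero only when $i=k$, whence $j=i+1=k+1=l$ and $E_{ij}=E_{kl}$; the two contributions $E_{kl}$ and $E_{ij}$ then cancel, and in every other subconfiguration both terms vanish. This confirms the bracket is $0$ throughout the overlap, so it falls into the ``otherwise'' case, completing the verification of Eq.~(\mref{eq:lie}).
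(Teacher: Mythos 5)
Your proposal is correct and takes essentially the same route as the paper: both establish the pre-Lie structure by citing Theorem~\mref{thm:fma}, Lemma~\mref{thm:preL} and Lemma~\mref{lem:preL}~(\mref{lem:preL1}), then compute the one-sided products $E_{ij}\rhd_{\epsilon}E_{kl}$ and $E_{kl}\rhd_{\epsilon}E_{ij}$ from Eqs.~(\mref{eq:matrcol}) and~(\mref{eq:preid}) via the rule $E_{ab}E_{cd}=\delta_{bc}E_{ad}$, and antisymmetrise. Your explicit collapse argument for the overlap $j=i+1$, $l=k+1$ (both range conditions force $i=k$, hence $E_{ij}=E_{kl}$ and the terms cancel) is precisely what the first case $E_{kl}-E_{ij}$ of the paper's intermediate formula~(\mref{eq:lie2}) encodes implicitly.
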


\begin{proof}
By Theorem~\mref{thm:fma} and Lemma~\mref{thm:preL}, $(M_n(\bfk), \rhd_{\epsilon})$ is a pre-Lie algebra. The remainder follows from Lemma~\ref{lem:preL}~(\ref{lem:preL1}). Moreover, by Eqs.~(\ref{eq:matrcol}) and~(\ref{eq:preid}), we have
\begin{align*}
E_{ij}\rhd_{\epsilon} E_{kl}=\begin{cases}
E_{kl} &\text{ if } k< j=i+1\leq l,\\
-E_{kl} &\text{ if }l< j=i+1\leq k,\\
0 &\text{ otherwise }.
\end{cases}\quad
E_{kl}\rhd_{\epsilon}E_{ij}=\begin{cases}
E_{ij} &\text{ if } i< l=k+1\leq j,\\
-E_{ij} &\text{ if }j< l=k+1\leq i,\\
0 &\text{ otherwise }.
\end{cases}
\end{align*}
Applying Eq.~(\mref{eq:lie1}) in Lemma~\ref{lem:preL}~(\ref{lem:preL1}), we obtain
\begin{align}
[E_{ij}, E_{kl}]_{\epsilon}=\begin{cases}
E_{kl}-E_{ij} &\text{$\mathrm{if}$}\, j=i+1= l=k+1,\\
E_{kl} &\text{$\mathrm{if}$}\, k< j=i+1\leq l,l\neq k+1,\\
-E_{kl} &\text{$\mathrm{if}$}\, l<j=i+1\leq k,\\
-E_{ij} &\text{$\mathrm{if}$}\, i< l=k+1\leq j,j\neq i+1,\\
E_{ij} &\text{$\mathrm{if}$}\,j< l=k+1\leq i,\\
0& \text{$\mathrm{otherwise}$}.
\end{cases}
\mlabel{eq:lie2}
\end{align}
Then Eq.~(\ref{eq:lie}) follows by summing up Eq.~(\ref{eq:lie2}).
\end{proof}

\begin{remark}
\begin{enumerate}
\item We  emphasize that our Lie bracket $[_-, _-]_{\epsilon}$ is different from the classical Lie bracket $[E_{ij}, E_{kl}]=\delta_{jk}E_{il}-\delta_{li}E_{kj}$ on matrix algebras.
\item We call the Lie bracket $[_-, _-]_{\epsilon}$ an {\bf $\epsilon$-Lie bracket} which is induced by a weighted $\epsilon$-(unitary) bialgebraic structure.
\end{enumerate}
\end{remark}

\begin{exam}
Consider the matrix algebra $M_{2}(\bfk)$. Let
\begin{align*}
M=\begin{bmatrix}
1&0\\
1&0
\end{bmatrix}, \quad
N=\begin{bmatrix}
0&1\\
0&1
\end{bmatrix}.
\end{align*}
Then $M=E_{11}+E_{21}$ and $N=E_{12}+E_{22}$. By Theorem~\mref{thm:preope11}, we have
\begin{align*}
[M, N]_{\epsilon}=[E_{11}+E_{21},E_{12}+E_{22}]_{\epsilon}
=&[E_{11},E_{12}]_{\epsilon}+[E_{11},E_{22}]_{\epsilon}
+[E_{21},E_{12}]_{\epsilon}+[E_{21},E_{22}]_{\epsilon}\\
=&[E_{21},E_{12}]_{\epsilon}=E_{21}.
\end{align*}
By Example~\mref{exam:matrix} and Lemma~\mref{thm:preL}, we also have
\begin{align*}
[M, N]_{\epsilon}=\sum_{N}N_{(1)}MN_{(2)}-\sum_{M}M_{(1)}NM_{(2)}=&\begin{bmatrix}1&0\\0&0\end{bmatrix}\begin{bmatrix}
1&0\\
1&0
\end{bmatrix} \begin{bmatrix}0&0\\0&1\end{bmatrix}+\begin{bmatrix}
0&0\\
1&0
\end{bmatrix}\begin{bmatrix}
0&1\\
0&1
\end{bmatrix}
\begin{bmatrix}
0&0\\
1&0
\end{bmatrix}\\
=&\begin{bmatrix}
0&0\\
1&0
\end{bmatrix}=E_{21}.
\end{align*}
\end{exam}

\section{Weighted infinitesimal unitary bialgebras on polynomial algebras}\label{sec:prelie}

In this section, we derive a weighted $\epsilon$-unitary bialgebraic structure from a non-commutative polynomial algebra.

\begin{defn}
Let $\bfk$ be a unitary commutative ring. A  non-commutative {\bf polynomial algebra} $\bfk \langle x_1,\ldots ,x_n\rangle$ with coefficients in $\bfk$ is a free algebra generated by $\{x_1, \ldots, x_n\}$.
\end{defn}

Denote by
$$ \mathrm{Mon}:=\{x_{i_1}^{\alpha_1} x_{i_2}^{\alpha_2} \cdots x_{i_m}^{\alpha_m}\mid 1\leq i_1, i_2, \ldots, i_m\leq n, \alpha_k\in \mathbb{N}\}.$$
Then the elements in $\mathrm{Mon}$ are called {\bf monomials} in $\bfk \langle x_1,\ldots ,x_n\rangle$ which are the elements from the set of all words in $\{x_1, \ldots, x_n\}$. Note that $\mathrm{Mon}$ is a $\bfk$-basis of $\bfk \langle x_1\ldots ,x_n\rangle$ and $\mathrm{Mon}$ is a free monoid with the identity $x_0:=1$.
We denote the multiplication on $\bfk \langle x_1,\ldots ,x_n\rangle$ by $m$.

For any word $w\in \mathrm{Mon}$ with length $l(w)=n$, we define a new notation to choose some elements of $w$. Denote by
\begin{align}
w[i,j]:=
\text{the $i$-th element to the $j$-th element of $w$} &\text{ if }1\leq i\leq j\leq n.
\end{align}

\begin{exam}
Consider the polynomial algebra $\bfk\langle x,y\rangle$. Let $w=xxyxy$. Then
\begin{align*}
w[1,4]=xxyx, w[3,3]=y\, \text{ and }\, w[2,3]=xy.
\end{align*}
\end{exam}

Let us now  define a coproduct on $\bfk \langle x_1,\ldots ,x_n\rangle$ such that it is further an $\epsilon$-unitary bialgebra of weight $\lambda$. For any word $w\in \mathrm{Mon}$, define
\begin{align}
\col(w):=
\begin{cases}
0&\text{ if }w=0,\\
-\lambda (1\otimes 1)&\text{ if }w=1,\\
\sum_{i=1}^nw[1,i]\otimes w[i,n]+\lambda\sum_{i=1}^{n-1}w[1,i]\otimes w[i+1,n] &\text{ if } l(w)=n>0.
\end{cases}
\mlabel{eq:polycol2}
\end{align}
We observe that $\bfk \langle x_1,\ldots ,x_n\rangle$ is closed under the coproduct $\col$.
\begin{lemma}\mlabel{lem:code2}
Let $w_1, w_2\in \bfk \langle x_1,\ldots ,x_n\rangle$. Then
\begin{align}
\col(w_1w_2)=w_1\cdot \col (w_2)+\col(w_1)\cdot w_2+\lambda w_1\otimes w_2.
\end{align}
\end{lemma}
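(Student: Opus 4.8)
The plan is to reduce to the case where $w_1$ and $w_2$ are both monomials and then match the two sides term by term. Since $\col$ and the multiplication $m$ are $\bfk$-bilinear, and the bimodule actions of Eq.~(\ref{eq:dota}) are bilinear, it suffices by linearity to verify the identity when $w_1$ and $w_2$ range over the basis $\mathrm{Mon}$, including the empty word $1$; the case where $w_1=0$ or $w_2=0$ is then immediate.

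First I would treat the main case: $w_1$ a word of length $p\geq 1$ and $w_2$ a word of length $q\geq 1$, so that $w_1w_2$ has length $n=p+q$. Expanding $\col(w_1w_2)$ via Eq.~(\ref{eq:polycol2}), I would split each of its two sums according to whether the cut index $i$ lands in the $w_1$-part or the $w_2$-part of the concatenation. In the weight-free sum $\sum_{i=1}^{n}(w_1w_2)[1,i]\ot(w_1w_2)[i,n]$, the indices $1\leq i\leq p$ give $w_1[1,i]\ot w_1[i,p]\,w_2$, and the indices $p+1\leq i\leq n$ give, after the reindexing $j=i-p$, the terms $w_1\,w_2[1,j]\ot w_2[j,q]$; using $a\cdot(b\ot c)=ab\ot c$ and $(b\ot c)\cdot a=b\ot ca$, these are exactly the weight-free parts of $\col(w_1)\cdot w_2$ and $w_1\cdot\col(w_2)$ respectively.

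The key point is the weight sum $\lambda\sum_{i=1}^{n-1}(w_1w_2)[1,i]\ot(w_1w_2)[i+1,n]$. Here the ranges $1\leq i\leq p-1$ and $p+1\leq i\leq n-1$ reproduce the $\lambda$-parts of $\col(w_1)\cdot w_2$ and $w_1\cdot\col(w_2)$, while the single boundary index $i=p$ contributes precisely $\lambda\,(w_1\ot w_2)$. This isolated boundary term is exactly the extra summand on the right-hand side, so producing it cleanly is the crux of the calculation.

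Finally I would dispose of the degenerate cases involving $1$. When exactly one of $w_1,w_2$ equals $1$, I would use $\col(1)=-\lambda(1\ot1)$ together with the bimodule action to check that the contribution of $\col(1)$ cancels against the weight term $\lambda(w_1\ot w_2)$, leaving $\col(w_2)$ (resp.\ $\col(w_1)$); the case $w_1=w_2=1$ is a direct three-term computation giving $-\lambda(1\ot1)=\col(1)$. I expect the only real obstacle to be notational, namely keeping the overlapping-endpoint convention of $w[i,j]$ straight across the two splits and correctly isolating the $i=p$ boundary term that accounts for the weight $\lambda$.
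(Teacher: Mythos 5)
Your proposal is correct and follows essentially the same route as the paper's proof: reduce to monomials by linearity, split the two sums defining $\col(w_1w_2)$ at the concatenation boundary, and observe that the missing boundary index $i=p$ in the weight sum is supplied exactly by the extra term $\lambda\, w_1\otimes w_2$ (the paper runs the identical computation from the right-hand side toward the left, which is immaterial). If anything, you are more careful than the paper, which dismisses the cases $w_i=0$ or $w_i=1$ with a single sentence, whereas you verify explicitly that $\col(1)=-\lambda(1\otimes 1)$ cancels the weight term in those degenerate cases.
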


\begin{proof}
It suffices to consider basis elements $w_1, w_2\in \mathrm{Mon}$ by linearity. Without loss of generality, we may suppose that $l(w_1)=m\geq 0$ and $l(w_2)=n\geq 0$ and so $w_1w_2=w$ is a new word of length $m+n$.
If $w_i=0$ or $1$, then we have done. Consider $m\geq 1$ and $n\geq 1$. By Eq.~(\mref{eq:polycol2}), we have
\begin{align*}
&\ w_1\cdot \col (w_2)+\col(w_1)\cdot w_2+\lambda w_1\otimes w_2\\
=&\ w_1\cdot\left(\sum_{i=1}^{n} w_2[1,i]\otimes w_2[i,n]+\lambda\sum_{i=1}^{n-1}w_2[1,i]\otimes w_2[i+1,n]\right)\\
& +\left(\sum_{i=1}^mw_1[1,i]\otimes w_1[i,m]+\lambda\sum_{i=1}^{m-1}w_1[1,i]\otimes w_1[i+1,m]\right)\cdot w_2+\lambda w_1\otimes w_2
\quad (\text{by Eq.~(\ref{eq:polycol2})})\\
=&\ \sum_{i=m+1}^{m+n}w[1,i]\otimes w[i,m+n]+\lambda\sum_{i=m+1}^{m+n-1}w[1,i]\otimes w[i+1,m+n]\\
&+\sum_{i=1}^m w[1,i]\otimes w[i,m+n]+\lambda\sum_{i=1}^{m-1}w[1,i]\otimes w[i+1,m+n]+\lambda w_1\otimes w_2\\
=&\ \sum_{i=m+1}^{m+n}w[1,i]\otimes w[i,m+n]+\lambda\sum_{i=m+1}^{m+n-1}w[1,i]\otimes w[i+1,m+n]\\
&+\sum_{i=1}^m w[1,i]\otimes w[i,m+n]+\lambda\sum_{i=1}^{m-1}w[1,i]\otimes w[i+1,m+n]
+\lambda w[1,m]\otimes w[m+1,m+n]\\
=&\sum_{i=1}^{m+n}w[1,i]\otimes w[i,m+n]+\lambda \sum_{i=1}^{m+n-1}w[1,i]\otimes w[i+1,m+n]\\
=&\ \col(w)=\col(w_1w_2) \quad (\text{by Eq.~(\ref{eq:polycol2})},
\end{align*}
as desired.
\end{proof}

\begin{lemma}
The pair $(\bfk \langle x_1,\ldots ,x_n\rangle,\col)$ is a coalgebra (without counit).
\mlabel{lem:coalg2}
\end{lemma}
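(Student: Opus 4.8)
The plan is to verify the coassociativity identity
\[
(\id \otimes \col)\col(w) = (\col \otimes \id)\col(w)
\]
directly on a basis word $w \in \mathrm{Mon}$, since coassociativity on a spanning set extends by linearity to all of $\bfk\langle x_1,\ldots,x_n\rangle$. The cases $w = 0$ and $w = 1$ are handled immediately: when $w = 0$ both sides vanish, and when $w = 1$ one checks that $(\id \otimes \col)(-\lambda(1\otimes 1)) = \lambda^2(1\otimes 1\otimes 1) = (\col\otimes\id)(-\lambda(1\otimes 1))$, using $\col(1) = -\lambda(1\otimes 1)$ from Eq.~(\mref{eq:polycol2}). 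So the real content is the case $l(w) = n > 0$.

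For a word $w$ of length $n > 0$, I would expand both sides by substituting the formula
\[
\col(w) = \sum_{i=1}^n w[1,i] \otimes w[i,n] + \lambda \sum_{i=1}^{n-1} w[1,i] \otimes w[i+1,n]
\]
and then applying $\col$ in the appropriate tensor slot. The key computational observation is that for any subword of the form $w[i,n]$ (respectively $w[1,i]$), the notation composes cleanly: $(w[i,n])[p,q] = w[i+p-1, i+q-1]$, so applying $\col$ to a subword just reindexes into subwords of $w$. Applying $\id \otimes \col$ produces a double sum of terms $w[1,i] \otimes w[i,j] \otimes w[j,n]$ (with $i \leq j$) together with $\lambda$-weighted and $\lambda^2$-weighted families obtained from the two parts of $\col$; applying $\col \otimes \id$ produces the mirror-image double sum $w[1,i] \otimes w[i,j] \otimes w[j,n]$ with the roles of the boundary indices swapped. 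The strategy is to collect the resulting terms by their \emph{weight} in $\lambda$ (the coefficient-$1$ terms, the $\lambda$-terms, and the $\lambda^2$-terms) and check that each of the three homogeneous pieces matches between the two sides.

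I expect the main obstacle to be the bookkeeping of the index ranges, especially the boundary cases where the two cut points coincide or are adjacent, since those are exactly where the two summations in $\col$ (the coefficient-$1$ sum running to $n$ versus the $\lambda$-sum running to $n-1$) overlap or fail to overlap. Concretely, the $\lambda^2$-contribution arises only from applying the $\lambda$-part of $\col$ twice, while the pure $\lambda$-contribution mixes one coefficient-$1$ cut with one $\lambda$-cut, and one must check that the shifted indices $w[i+1,j]$ versus $w[i,j]$ reassemble correctly after swapping summation order. The cleanest way to manage this is to reindex each triple sum so that the middle factor is written uniformly as $w[a,b]$ and then observe that both sides run over the same set of admissible pairs $(a,b)$ with the same coefficients; this reduces the entire verification to an interchange-of-summation-order argument, exactly as in the proof of Lemma~\mref{lem:coal} for the matrix case. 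Once the three $\lambda$-homogeneous components agree, coassociativity follows and the proof is complete.
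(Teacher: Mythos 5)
Your proposal is correct and takes essentially the same route as the paper's own proof: reduce to basis words in $\mathrm{Mon}$, expand both sides of coassociativity via Eq.~(\mref{eq:polycol2}) using the composition property of the subword notation, and match the resulting coefficient-$1$, $\lambda$- and $\lambda^2$-families of triple tensors by reindexing and exchanging the order of summation. The only cosmetic difference is that the paper dismisses the cases $w=0,1$ as trivial, whereas you verify the $w=1$ case explicitly (both sides equal $\lambda^{2}(1\ot 1\ot 1)$), which is a harmless refinement of the same argument.
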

\begin{proof}
It is enough to check the coassociative law:
\begin{align}
(\col \otimes \id)\col (w)=(\id\otimes \col)\col (w)\, \text{ for }\, w\in \mathrm{Mon}.
\mlabel{eq:coaid}
\end{align}
When $w$ is $0$ or $1$, then the result holds for trivially.
Consider $l(w)\geq 1$.
On the one hand,
\begin{align*}
&(\col\otimes \id)\col(w)\\
=&(\col\otimes \id)\left(\sum_{i=1}^nw[1,i]\otimes w[i,n]+\lambda\sum_{i=1}^{n-1}w[1,i]\otimes w[i+1,n]\right)
\quad (\text{by Eq.~(\ref{eq:polycol2})})\\
=&\sum_{i=1}^n\col(w[1,i])\otimes w[i,n]+\lambda\sum_{i=1}^{n-1}\col(w[1,i])\otimes w[i+1,n]\\
=&\sum_{i=1}^n\left(\sum_{j=1}^{i}w[1,j]\otimes w[j,i]+\lambda\sum_{j=1}^{i-1}w[1,j]\otimes w[j+1,i]\right)\otimes w[i,n]\\
&+\lambda\sum_{i=1}^{n-1}\left(\sum_{j=1}^{i}w[1,j]\otimes w[j,i]+\lambda\sum_{j=1}^{i-1} w[1,j]\otimes w[j+1,i]\right)\otimes w[i+1,n]\quad (\text{by Eq.~(\ref{eq:polycol2})})\\
=&\sum_{i=1}^n\sum_{j=1}^{i}w[1,j]\otimes w[j,i]\otimes w[i,n]+\lambda\sum_{i=1}^n\sum_{j=1}^{i-1}w[1,j]\otimes w[j+1,i]\otimes w[i,n])\\
&+\lambda\sum_{i=1}^{n-1}\sum_{j=1}^{i}w[1,j]\otimes w[j,i]\otimes w[i+1,n]+\lambda^2\sum_{i=1}^{n-1}\sum_{j=1}^{i-1} w[1,j]\otimes w[j+1,i]\otimes w[i+1,n]\\
=&\sum_{i=1}^n\sum_{j=1}^{i}w[1,j]\otimes w[j,i]\otimes w[i,n]+\lambda\sum_{i=2}^n\sum_{j=1}^{i-1}w[1,j]\otimes w[j+1,i]\otimes w[i,n])\\
&+\lambda\sum_{i=1}^{n-1}\sum_{j=1}^{i}w[1,j]\otimes w[j,i]\otimes w[i+1,n]+\lambda^2\sum_{i=2}^{n-1}\sum_{j=1}^{i-1} w[1,j]\otimes w[j+1,i]\otimes w[i+1,n].
\end{align*}
On the other hand,
\begin{align*}
&(\id\otimes \col)\col(w)\\
=&(\id\otimes \col)\left(\sum_{i=1}^nw[1,i]\otimes w[i,n]+\lambda\sum_{i=1}^{n-1}w[1,i]\otimes w[i+1,n]\right)
\quad (\text{by Eq.~(\ref{eq:polycol2})})\\
=&\sum_{i=1}^nw[1,i]\otimes \col(w[i,n])+\lambda\sum_{i=1}^{n-1}w[1,i]\otimes \col(w[i+1,n])\\
=&\sum_{i=1}^nw[1,i]\otimes\left(\sum_{j=i}^nw[i,j]\otimes w[j,n]+\lambda\sum_{j=i}^{n-1}w[i,j]\otimes w[j+1,n]\right)\\
&+\lambda\sum_{i=1}^{n-1}w[1,i]\otimes\left(\sum_{j=i+1}^n w[i+1,j]\otimes w[j,n]+\lambda\sum_{j=i+1}^{n-1}w[i+1,j]\otimes w[j+1,n]\right)\quad (\text{by Eq.~(\ref{eq:polycol2})})\\
=&\sum_{i=1}^n\sum_{j=i}^nw[1,i]\otimes w[i,j]\otimes w[j,n]+\lambda\sum_{i=1}^n\sum_{j=i}^{n-1}w[1,i]\otimes w[i,j]\otimes w[j+1,n]\\
&+\lambda\sum_{i=1}^{n-1}\sum_{j=i+1}^nw[1,i]\otimes w[i+1,j]\otimes w[j,n]+\lambda^2\sum_{i=1}^{n-1}\sum_{j=i+1}^{n-1}w[1,i]\otimes w[i+1,j]\otimes w[j+1,n]\\
=&\sum_{i=1}^n\sum_{j=i}^nw[1,i]\otimes w[i,j]\otimes w[j,n]+\lambda\sum_{i=1}^{n-1}\sum_{j=i}^{n-1}w[1,i]\otimes w[i,j]\otimes w[j+1,n]\\
&+\lambda\sum_{i=1}^{n-1}\sum_{j=i+1}^nw[1,i]\otimes w[i+1,j]\otimes w[j,n]+\lambda^2\sum_{i=1}^{n-2}\sum_{j=i+1}^{n-1}w[1,i]\otimes w[i+1,j]\otimes w[j+1,n]\\
=&\sum_{j=1}^n\sum_{i=1}^jw[1,i]\otimes w[i,j]\otimes w[j,n]+\lambda\sum_{j=1}^{n-1}\sum_{i=1}^jw[1,i]\otimes w[i,j]\otimes w[j+1,n]\\
&+\lambda\sum_{j=2}^n\sum_{i=1}^{j-1}w[1,i]\otimes w[i+1,j]\otimes w[j,n]+\lambda^2\sum_{j=2}^{n-1}\sum_{i=1}^{j-1}w[1,i]\otimes w[i+1,j]\otimes w[j+1,n]\\
& \hspace{7cm}(\text{by exchanging the order of $i$ and $j$ in all sums})\\
=&\sum_{i=1}^n\sum_{j=1}^iw[1,j]\otimes w[j,i]\otimes w[i,n]+\lambda\sum_{i=1}^{n-1}\sum_{j=1}^iw[1,j]\otimes w[j,i]\otimes w[i+1,n]\\
&+\lambda\sum_{i=2}^n\sum_{j=1}^{i-1}w[1,j]\otimes w[j+1,i]\otimes w[i,n]+\lambda^2\sum_{i=2}^{n-1}\sum_{j=1}^{i-1}w[1,j]\otimes w[j+1,i]\otimes w[i+1,n]\\
& \hspace{7cm}(\text{by exchanging the index of $i$ and $j$}).
\end{align*}
This completes the proof.
\end{proof}
Now we state our main result in this section.
\begin{theorem}
The quadruple $(\bfk \langle x_1,\ldots ,x_n\rangle, m, 1,\col)$ is an $\epsilon$-unitary bialgebra of weight $\lambda$.
\mlabel{thm:fma1}
\end{theorem}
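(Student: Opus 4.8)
The plan is to verify directly the three conditions in Definition~\ref{def:iub} that characterise an $\epsilon$-unitary bialgebra of weight $\lambda$, observing that each has essentially already been prepared. First I would note that $(\bfk\langle x_1,\ldots,x_n\rangle, m, 1)$ is a unitary associative algebra by construction, being the free algebra on $\{x_1,\ldots,x_n\}$ with unit $x_0=1$; this needs no checking. It is also worth recording that the prescribed value $\col(1)=-\lambda(1\otimes 1)$ in Eq.~(\ref{eq:polycol2}) is exactly the value forced on the unit in any unitary $\epsilon$-bialgebra of weight $\lambda$ (see Remark~\ref{remk:unit1}), so the definition of $\col$ is internally consistent with unitarity.

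Next, the coassociativity of $\col$, i.e. that $(\bfk\langle x_1,\ldots,x_n\rangle, \col)$ is a coalgebra without counit, is precisely Lemma~\ref{lem:coalg2}. Finally, the weighted cocycle identity Eq.~(\ref{eq:cocycle}),
\[
\col(w_1 w_2) = w_1\cdot\col(w_2) + \col(w_1)\cdot w_2 + \lambda\,(w_1\otimes w_2),
\]
is precisely Lemma~\ref{lem:code2}. Assembling these three facts yields the theorem, in complete parallel with the proof of Theorem~\ref{thm:fma} for matrix algebras, which likewise followed from its two preparatory lemmas.

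Because the theorem reduces to a one-line invocation of Lemmas~\ref{lem:code2} and~\ref{lem:coalg2}, there is no genuine obstacle at the level of the theorem itself; the real work lies upstream in those lemmas. Of the two, I expect Lemma~\ref{lem:coalg2} to be the more delicate, since establishing $(\col\otimes\id)\col = (\id\otimes\col)\col$ requires matching the $\lambda^0$, $\lambda$, and $\lambda^2$ contributions separately on both sides after reindexing the nested double sums written in the interval notation $w[i,j]$. By contrast Lemma~\ref{lem:code2} is comparatively direct: splitting a word $w=w_1 w_2$ at each internal position telescopes cleanly, and the extra term $\lambda\,(w_1\otimes w_2)$ accounts exactly for the split point falling at the junction between $w_1$ and $w_2$.
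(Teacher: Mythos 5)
Your proposal is correct and matches the paper's own proof, which likewise deduces Theorem~\ref{thm:fma1} directly from Lemma~\ref{lem:code2} (the weight-$\lambda$ derivation property of $\col$) and Lemma~\ref{lem:coalg2} (coassociativity), with the unitary algebra structure of $\bfk\langle x_1,\ldots,x_n\rangle$ taken as given. Your added observation that $\col(1)=-\lambda(1\otimes 1)$ is forced by unitarity (Remark~\ref{remk:unit1}) is a harmless consistency check, not a departure from the paper's argument.
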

\begin{proof}
It follows from the Lemmas~\mref{lem:code2} and~\mref{lem:coalg2}.
\end{proof}

\begin{exam}\mlabel{exam:poly}
Consider the polynomial algebra $\bfk\langle x,y\rangle$. Let $w_1=xy$, $w_2=yxy$ be two words in $\bfk\langle x,y\rangle$. By the definition of $\col$ in Eq.~(\mref{eq:polycol2}), we have
\begin{align}
\col(w_1)&=xy\otimes y+x\otimes xy+\lambda x\otimes y
\mlabel{eq:e1}
\end{align}
and
\begin{align}
\col(w_2)&=yxy\otimes y+yx\otimes xy+y\otimes yxy+\lambda(yx\otimes y +y\otimes xy).
\mlabel{eq:e2}
\end{align}
Then
\begin{align*}
&\ (\col\otimes\id)\col(w_1)\\
=&\ (\col\otimes\id)(xy\otimes y +x\otimes xy+\lambda x\otimes y)\quad
(\text{by Eq.~(\ref{eq:e1})})\\
=&\ (xy\otimes y+x\otimes xy+\lambda x\otimes y)\otimes y+x\otimes x\otimes xy+\lambda x\otimes x\otimes y\quad
(\text{by Eq.~(\ref{eq:polycol2})})\\
=&\ x\otimes xy\otimes y+xy\otimes y\otimes y+x\otimes x\otimes xy+\lambda x\otimes y\otimes y+\lambda x\otimes x\otimes y\\
=&\ x\otimes(xy\otimes y+x\otimes xy+\lambda x\otimes y)+xy\otimes y\otimes y+\lambda x\otimes y\otimes y\\
=&\ x\otimes\col(xy)+xy\otimes\col(y)+\lambda x\otimes\col(y)\quad (\text{by Eq.~(\ref{eq:polycol2})})\\
=&\ (\id\otimes\col)(x\otimes xy+xy\otimes y+\lambda x\otimes y)\\
=&\ (\id\otimes\col)\col(xy)\quad (\text{by Eq.~(\ref{eq:e1})}).
\end{align*}
Similarly,
\begin{align*}
&\ (\col\otimes\id)\col(w_2)\\
=&\ (\col\otimes\id)\Big(yxy\otimes y+yx\otimes xy+y\otimes yxy+\lambda(yx\otimes y +y\otimes xy)\Big)\quad (\text{by Eq.~(\ref{eq:e2})})\\
=&\ y\otimes\Big(yxy\otimes y +yx\otimes xy+y\otimes yxy+\lambda(yx\otimes y +y\otimes xy)\Big)
+yx\otimes(xy\otimes y+x\otimes xy+\lambda x\otimes y)\\
&+yxy\otimes y\otimes y+\lambda\Big(y\otimes(x\otimes xy+xy\otimes y+\lambda x\otimes y)\Big)+\lambda yx\otimes y\otimes y\\
=&\ y\otimes\col(yxy)+yx\otimes\col(xy)+yxy\otimes\col(y)+\lambda y\otimes\col(xy)+\lambda yx\otimes\col(y)\quad (\text{by Eq.~(\ref{eq:polycol2})})\\
=&\ (\id\otimes\col)(y\otimes yxy+yx\otimes xy+yxy\otimes y+\lambda y\otimes xy+\lambda yx\otimes y)\\
=&\ (\id\otimes\col)\col(w_2)\quad (\text{by Eq.~(\ref{eq:e2})}).
\end{align*}
A directly calculation shows that
\begin{align*}
 w_1\cdot\col(w_2)+\col(w_1)\cdot w_2+\lambda w_1\otimes w_2
=&\col(xy)\cdot yxy+xy\cdot\col(yxy)+\lambda xy\otimes yxy\\
=&x\otimes xyyxy+xy\otimes yyxy+xyy\otimes yxy+xyyx\otimes xy+xyyxy\otimes y\\
 &+\lambda(x\otimes yyxy+xy\otimes yxy+xyy\otimes xy+xyyx\otimes y)\\
=&\col(xyyxy)=\col(w_1w_2)\quad (\text{by Eq.~(\ref{eq:polycol2})}).
\end{align*}
\end{exam}

\smallskip

\noindent {\bf Acknowledgments}:
We thank the anonymous referee for valuable suggestions helping to improve the paper.
\medskip

\noindent {\bf Funds}:
This work was supported by the National Natural Science Foundation
of China (Grant No.\@ 11771191).
\medskip

\noindent
\medskip

\end{document}